\documentclass[a4paper, 10pt]{article}

\usepackage{graphics,graphicx}
\usepackage{amssymb,amsmath}
\usepackage{enumerate}

\usepackage{color}

\newtheorem{theorem}{Theorem}

\newtheorem{claim}{Claim}[theorem]
\newtheorem{corollary}{Corollary}
\newtheorem{lemma}{Lemma}

 \def \sm {\setminus}
 \def \es {\emptyset}

\newenvironment{proof}[1][]%
{\noindent {\setcounter{equation}{0}\it Proof.
}{#1}{}}{\hfill$\Box$\vspace{2ex}}

\newenvironment{proof2}[1][]%
{\noindent {\setcounter{equation}{0}\it Proof.
}{#1}{}}{\hfill$\Diamond$\vspace{2ex}}

\begin{document}
\title{On graphs with no induced five-vertex path or  paraglider}

\author{
Shenwei Huang\thanks{College of Computer Science, Nankai University, Tianjin 300350, China.
This research is partially supported by the National Natural Science Foundation of China (11801284).}
\and
T. Karthick\thanks{Computer Science Unit, Indian Statistical
Institute, Chennai Centre, Chennai 600029, India. This research is partially supported by SERB-DST, Government of India under MATRICS scheme.}}
\date{\today}

\maketitle

\begin{abstract}

  Given two graphs $H_1$ and $H_2$, a graph is $(H_1,\,H_2)$-free if it contains no induced subgraph isomorphic to $H_1$ or $H_2$.
For a positive integer $t$, $P_t$ is the chordless path on $t$ vertices. A \emph{paraglider} is the graph  that consists of a chorless cycle $C_4$ plus a vertex adjacent to three vertices of the $C_4$.
   In this paper, we study the structure of ($P_5$,\,paraglider)-free graphs, and show that every such  graph $G$ satisfies $\chi(G)\le \lceil \frac{3}{2}\omega(G) \rceil$, where $\chi(G)$ and $\omega(G)$ are the chromatic number
and clique number of $G$, respectively.  Our bound is attained by the complement of the Clebsch graph on 16 vertices.
More strongly, we completely characterize all the ($P_5$,\,paraglider)-free graphs $G$ that satisfies $\chi(G)> \frac{3}{2}\omega(G)$.
We also construct an infinite family of ($P_5$,\,paraglider)-free graphs such that every graph $G$
in the family has $\chi(G)=\lceil \frac{3}{2}\omega(G) \rceil-1$. This shows that our upper bound is optimal
up to an additive constant and that there is no $(\frac{3}{2}-\epsilon)$-approximation algorithm to the chromatic number of ($P_5$,\,paraglider)-free graphs for any $\epsilon>0$.
\end{abstract}

\noindent{\bf Keywords.} $P_5$-free graphs; Chromatic number; Clique number. 

\section{Introduction}
Graphs in this paper are simple and finite. Given a positive integer $\ell$, we denote the path on $\ell$ vertices by
$P_\ell$, and we denote the complete graph on $\ell$ vertices by $K_\ell$. For an integer $\ell\geq 3$, $C_\ell$ is the cycle on $\ell$ vertices.
A \emph{paraglider} is the graph   that consists of a $C_4$ plus a vertex
adjacent to three vertices of the $C_4$.    Given two graphs $G$ and $H$, we denote by $G\cup H$ the disjoint union of $G$ and $H$, and  by $G+H$ the join of $G$ and $H$.  The
union of $k$ copies of the same graph $G$ will be denoted by $kG$; for
example $2K_2$ denotes the graph that consists in two disjoint copies
of $K_2$. The complement of a graph $G$ is denoted by $\overline{G}$.  A \emph{hole} (\emph{antihole}) in a graph is an induced subgraph that is isomorphic to $C_\ell$ ($\overline{C_\ell}$) with $\ell \ge 4$, and $\ell$ is the
length of the hole (antihole).  A hole or an antihole is \emph{odd} if $\ell$ is odd.
Given a family of graphs ${\cal H}$, a graph $G$ is \emph{${\cal
F}$-free} if no induced subgraph of $G$ is isomorphic to a member of
${\cal F}$; when ${\cal F}$ has only one element $H$ we say that $G$
is $H$-free; when $\cal{F}$ has two elements $H_1$ and $H_2$, we simply write $G$ is ($H_1,H_2$)-free instead of $\{H_1,H_2\}$-free.

For any integer $k$, a \emph{$k$-coloring} of a graph $G$ is a mapping
$\psi:V(G)\rightarrow\{1,\ldots,k\}$ such that  $\psi(u)\neq \psi(v)$ whenever $u$ and $v$ are adjacent in $G$.  A graph is \emph{$k$-colorable}
if it admits a $k$-coloring.  The \emph{chromatic number} $\chi(G)$ of
a graph $G$ is the smallest integer $k$ such that $G$ is
$k$-colorable.
   A \emph{clique} in a graph
$G$ is a set of pairwise adjacent vertices, and the \emph{clique number} of $G$, denoted by $\omega(G)$, is the size of a maximum clique in $G$.
Obviously $\chi(H)\ge \omega(H)$ for every induced subgraph $H$ of $G$.
A graph $G$ is \emph{perfect} if every induced subgraph $H$ of $G$
satisfies $\chi(H) = \omega(H)$.  Chudnovsky et al. \cite{SPGT} showed that a graph is perfect   if and only if it does not contain an odd hole or an odd antihole as an induced subgraph, and is known as the Strong Perfect Graph Theorem (SPGT).
A class of graphs $\cal{G}$ is said   to be \emph{$\chi$-bounded} \cite{Gyarfas}
if there is a function $f$ (called a \emph{$\chi$-binding function}) such that  every  $G\in \cal{G}$
satisfies $\chi(G)\le f(\omega(G))$. For instance, the class of perfect graphs is $\chi$-bounded with identity function $f(x)=x$ as the $\chi$-binding function. In fact, several classes of graphs are known to be $\chi$-bounded; see \cite{KP, KZ, SS, survey}.

  Gy\'arf\'as \cite{Gyarfas} studied the $\chi$-boundedness for the class of $P_t$-free graphs, and showed that every $P_t$-free graph $G$ has $\chi(G) \leq  (t-1)^{\omega(G)-1}$. It is well known that for $t\leq 4$, $P_t$-free graphs are perfect.  The problem of
determining whether the class of $P_t$-free graphs ($t\geq 5$) admits a polynomial
$\chi$-binding function remains open,  and seems to be difficult even when $t=5$. Moreover, the existence of polynomial $\chi$-binding function for the class of $P_t$-free graphs ($t\geq 5$) would imply the Erd\"os-Hajnal conjecture for $P_t$-free graphs; see \cite{Chud-survey}. The best known $\chi$-binding
function $f$ for  the class of $P_5$-free graphs   satisfies $c(\omega^2/\log w)\le
f(\omega)\le 2^{\omega}$; see \cite{KPT-P5}.
Here we are interested in   $\chi$-binding functions for the class of  ($P_5$,\,$H$)-free graphs, for various graphs $H$. Recently, Brause et al. \cite{BRSV} showed that the class of ($2K_2$,\,$3K_1$)-free graphs does not admit a linear $\chi$-binding function. It follows that the class of ($P_5$,\,$H$)-free graphs, where $H$ is any $P_5$-free graph with independence number $\alpha(H) \geq 3$, does not  admit a linear $\chi$-binding function.  Thus it is interesting to the study  of $\chi$-boundedness for the class of ($P_5$,\,$H$)-free graphs where $\alpha(H) \leq 2$.
Choudum et al.\,\cite{CKS} showed that every ($P_5$,\,$C_4$)-free graph $G$ satisfies $\chi(G)\leq \lceil\frac{5\omega(G)}{4}\rceil$, and that every ($P_5$,\,$K_1+C_4$)-free graph $G$ satisfies $\chi(G)\leq 5\lceil\frac{5\omega(G)}{4}\rceil$.
It is shown in \cite{KM,survey} that every ($P_5$,\,diamond)-free graph $G$ satisfies $\chi(G)\leq \omega(G)+1$, and in \cite{BRSV} that every ($P_5$,\,paw)-free graph $G$ satisfies $\chi(G)\leq \omega(G)+1$.
Chudnovsky and Sivaram \cite{Chud-Siva} showed that every ($P_5$,\,$C_5$)-free graph $G$ satisfies $\chi(G) \leq 2^{\omega(G)-1}$. Fouquet
et al.~\cite{Fouquet} proved that there are infinitely many ($P_5,\,\overline{P_5}$)-free graphs $G$ with $\chi(G)\ge \omega(G)^{\mu}$, where $\mu= \log_25-1$, and that every ($P_5,\,\overline{P_5}$)-free graph $G$ satisfies $\chi(G)\le{{\omega(G)+1}\choose{2}}$.   Very recently, Chudnovsky et al. \cite{CKMM} showed that  every ($P_5$,\,$K_1+P_4$)-free graph $G$ satisfies $\chi(G)\leq \lceil\frac{5\omega(G)}{4}\rceil$. We refer to a recent comprehensive survey of Schiermeyer and Randerath \cite{survey} for more results.

In this paper, we study the structure of the class of  ($P_5$,\,paraglider)-free graphs, and show that every such graph $G$ satisfies $\chi(G)\leq \lceil\frac{3\omega(G)}{2}\rceil$.  Our bound is attained by the complement of the well-known $5$-regular \emph{Clebsch graph} on 16 vertices. More strongly, we completely characterize all the ($P_5$,\,paraglider)-free graphs $G$ that satisfies $\chi(G)> \frac{3}{2}\omega(G)$.
We also construct an infinite family of ($P_5$,\,paraglider)-free graphs such that every graph $G$
in the family has $\chi(G)=\lceil\frac{3}{2}\omega(G)\rceil-1$. This shows that our upper bound is optimal
up to an additive constant, and that there is no $(\frac{3}{2}-\epsilon)$-approximation algorithm  to the chromatic number of  ($P_5$,\,paraglider)-free graphs
for any $\epsilon>0$. Moreover, our results  generalizes the   results known on the existence of linear $\chi$-binding functions for ($P_5$,\,$C_4$)-free graphs,  ($P_5$,\,paw)-free graphs,  ($P_5$,\,diamond)-free graphs, and for ($3K_1$,\,paraglider)-free graphs \cite{CKS-2}.

\section{Notations and Preliminaries}

We use standard notation and terminology.
 In a graph $G$, the \emph{neighborhood} of a vertex $x$ is the set
$N_G(x)=\{y\in V(G)\setminus \{x\}\mid xy\in E(G)\}$; we drop the
subscript $G$ when there is no ambiguity.  The \emph{non-neighborhood} of a vertex $x$ is the set $V(G)\sm (N(x)\cup\{x\})$, and is denoted by $\overline{N(x)}$.   A vertex is
\emph{universal} if it is adjacent to all other vertices.  Two non-adjacent vertices $u$ and $v$ in a graph $G$ are \emph{comparable}
if $N(u) \subseteq N(v)$ or $N(v) \subseteq N(u)$.   For any $x\in V(G)$ and $A\subseteq
V(G)\setminus x$, we let $N_A(x) = N(x)\cap A$. Let $X$ be a subset of $V(G)$. We denote by $G[X]$
 the subgraph induced by $X$ in $G$. For simplicity, we write $G\sm X$ instead of $G[V(G)\sm X]$. Further if $X$ is singleton, say $\{v\}$, we write $G-v$ instead of $G\sm\{v\}$.   For any two subsets
$X$ and $Y$ of $V(G)$, we denote by $[X,Y]$, the set of edges that has
one end in $X$ and other end in $Y$.  We say that $X$ is
\emph{complete} to $Y$ or $[X,Y]$ is complete if every vertex in $X$
is adjacent to every vertex in $Y$; and $X$ is \emph{anticomplete} to
$Y$ if $[X,Y]=\emptyset$.  If $X$ is singleton, say $\{v\}$, we simply
write $v$ is complete (anticomplete) to $Y$ instead of writing $\{v\}$
is complete (anticomplete) to $Y$.     We
say that a subgraph $H$ of $G$ is \emph{dominating} if every vertex in
$V(G)\setminus V(H)$ is a adjacent to a vertex in $H$.  A
\emph{clique-cutset} of a graph $G$ is a clique $K$ in $G$ such that
$G \setminus K$ has more connected components than $G$. An \emph{atom} is a
connected graph without a clique-cutset.

 A
\emph{stable set} is a set of pairwise non-adjacent vertices.
  We say that two sets \emph{meet} if their
intersection is not empty.
In a graph $G$, we say that a stable set is \emph{good} if it meets
every clique of size $\omega(G)$.

An \emph{expansion} of a graph $H$ is any graph $G$ such that $V(G)$ can
be partitioned into $|V(H)|$ non-empty  sets
$Q_v$, $v\in V(H)$, such that $[Q_u,Q_v]$ is complete if $uv\in E(H)$,
and $[Q_u,Q_v]=\es$ if $uv\notin E(H)$. An expansion of a graph is a \emph{clique expansion} if each $Q_v$ is a clique,
 is a \emph{$\overline{P_3}$-free expansion} if each $Q_v$ induces a $\overline{P_3}$-free graph, and is a \emph{perfect expansion} if each $Q_v$ induces a perfect graph. By a classical result of Lov\'asz \cite{Lovasz}, any perfect expansion of a  perfect graph is perfect. In particular, any \emph{$\overline{P_3}$-free expansion} of a perfect graph is perfect.

\begin{figure}[h]
\centering
 \includegraphics[width=11cm]{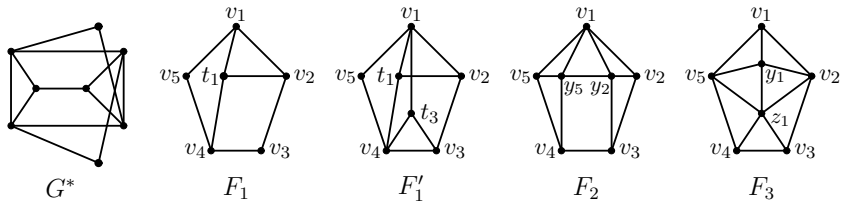}
\caption{Some special graphs}\label{fig:GF123}
\end{figure}

Let $G^*,F_1,F_1',F_2,F_3$ be five graphs as shown in
Figure~\ref{fig:GF123}.

\pagebreak

 Let $\cal{H}$ be the class of graphs $G$ such that $V(G)$ can be partitioned into five sets $Q_1, Q_2, R_1, R_2, S$ such that:
\begin{itemize}\itemsep=1pt
\item $Q_1=\{a_1,a_2\ldots, a_k\}$, $Q_2=\{b_1,b_2,\ldots,b_k\}$ (where $k\geq 2$),  $S$ are cliques, $[Q_1, Q_2]$ is a perfect matching, say
 $\{a_1b_1,a_2b_2,\ldots,a_kb_k\}$  and $|S|\leq k$.
\item $G[R_1]$ and $G[R_2]$ are perfect.
\item  $[Q_1, R_1]$, $[Q_2, R_2]$ are complete, $[Q_1\cup R_1, R_2] =\es$ and $[Q_2\cup R_2, R_1]=\es$.
\item  $[S, R_1\cup R_2]$ is complete.

\item There exists an injective function $f:S\rightarrow \{1,2,\ldots, k\}$ such that for each vertex  $x\in S$, $\{x\}$ is anti-complete to $\{a_{f(x)}, b_{f(x)}\}$, and is complete to $(Q_1\cup Q_2) \sm \{a_{f(x)}, b_{f(x)}\}$.
\item No other edges in $G$.
\end{itemize}
Clearly, the graphs $\overline{C_6}$ and $F_2$ belong to $\cal{H}$. See   Section~4 for more examples.

\medskip
We will use
the following theorem of Brandst\"adt and Ho\`ang \cite{BH}.
\begin{theorem}[\cite{BH}]\label{thm:BH}
Let $G$ be a ($P_5$,\,paraglider)-free atom that has no universal or pair of comparable vertices.  Then
either $G$ is $G^*$ or  every induced $C_5$ in $G$ is dominating. \hfill{$\Box$}
\end{theorem}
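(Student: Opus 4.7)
The plan is to prove the contrapositive. Suppose $G$ is a $(P_5,\text{paraglider})$-free atom with no universal vertex and no comparable pair, and suppose that $G$ contains a non-dominating induced five-cycle $C = v_1 v_2 v_3 v_4 v_5 v_1$; pick $w \in V(G) \setminus V(C)$ with $N(w) \cap V(C) = \emptyset$. The aim is to show $G \cong G^*$.

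The first step is to catalogue, for every $u \in V(G) \setminus V(C)$, the possible trace $N_C(u) = N(u) \cap V(C)$. Because $G$ is $P_5$-free, $|N_C(u)| \neq 1$: if $N_C(u) = \{v_i\}$ then $u v_i v_{i-1} v_{i-2} v_{i-3}$ is an induced $P_5$. Two neighbors of $u$ on $C$ cannot be consecutive either: if $N_C(u) = \{v_i, v_{i+1}\}$ then $u v_{i+1} v_{i+2} v_{i+3} v_{i+4}$ is an induced $P_5$. So, up to rotation, only the trace-types $\emptyset$, $\{v_1, v_3\}$, $\{v_1, v_2, v_3\}$, $\{v_1, v_2, v_4\}$, $\{v_1, v_2, v_3, v_4\}$ and $V(C)$ survive. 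I would then step through each pair of types and determine which cross-adjacencies between outside vertices are allowed, using $P_5$- and paraglider-freeness with $w$ as an auxiliary non-neighbor: many candidate edges are killed because they extend to an induced $P_5$ reaching $w$ along a shortest path, or produce an induced $C_4$ with a degree-three apex.

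Once the attachment structure is completely pinned down, the three global hypotheses take over. The atom hypothesis forbids clique-cutsets, which rules out the possibility that $V(C)$, or $V(C)$ together with the ``complete to $C$'' vertices, separates $w$ from the rest of $G$; this forces certain cross-type edges. The absence of a universal vertex bounds the size of the ``complete to $C$'' class, and the absence of a comparable pair collapses each remaining trace-type to a very small controlled set, since two vertices of the same trace-type with identical cross-adjacencies are automatically comparable. Assembling the resulting adjacency pattern and comparing with Figure~\ref{fig:GF123} should identify the graph as $G^*$.

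The principal obstacle will be the case analysis in the second paragraph: every ordered pair of trace-types must be tested against both forbidden induced subgraphs, and the two three-vertex traces $\{v_1, v_2, v_3\}$ and $\{v_1, v_2, v_4\}$ provide enough local room that a naive check inside $V(C) \cup \{u, u'\}$ misses configurations that only fail globally. A guiding heuristic I would apply throughout is to always keep the non-neighbor $w$ and a shortest $w$-to-$C$ path visible: many would-be induced $P_5$'s or paragliders that are invisible from inside $V(C) \cup \{u, u'\}$ become forced once the attachment structure of the connecting path from $w$ is taken into account, which is where I expect the proof to become technical in the style of Brandst\"adt and Ho\`ang's structural arguments.
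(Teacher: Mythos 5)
First, a point of order: the paper does not prove this statement at all --- it is quoted verbatim from Brandst\"adt and Ho\`ang \cite{BH} (hence the $\Box$ attached to the statement and the absence of any proof environment), so there is no in-paper argument to compare against and your attempt must stand on its own. As it stands, it is a plan rather than a proof. The one step you actually complete --- the catalogue of possible traces $N_C(u)$, namely $\emptyset$, $\{v_{i-1},v_{i+1}\}$, $\{v_{i-1},v_i,v_{i+1}\}$, $\{v_i,v_{i+1},v_{i+3}\}$, $C\setminus\{v_i\}$ and $C$ --- is correct, and it coincides with the classification the paper itself sets up in Theorem~\ref{thm:C5} (the sets $X_i$, $Y_i$, $T_i$, $Z_i$, $A$, plus the empty trace, which is the whole point in the non-dominating case). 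But the entire substance of the Brandst\"adt--Ho\`ang lemma is precisely the part you defer: which cross-adjacencies between the classes survive, how the non-neighbor $w$ forces or kills them, and why the outcome is the one specific eight-vertex graph $G^*$. Phrases like ``I would then step through each pair of types'' and ``assembling the resulting adjacency pattern should identify the graph as $G^*$'' are promissory notes, and you yourself flag this case analysis as the principal obstacle; so the proposal has a genuine gap --- the proof is simply not carried out.

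Moreover, the two mechanisms you plan to rely on for the endgame are flawed as stated. (i) ``Two vertices of the same trace-type with identical cross-adjacencies are automatically comparable'' is false under the definition used here: comparable vertices are required to be \emph{non-adjacent}, so adjacent twins are never comparable. This is not a technicality: clique expansions of $C_5$ (the class $\mathcal{C}_5^*$ of Section~4) are ($P_5$,\,paraglider)-free atoms with no universal vertex and no pair of comparable vertices, yet their classes are arbitrarily large. Hence the no-comparable-pair hypothesis alone can never collapse the trace classes to a bounded structure; in the non-dominating situation the collapse is real, but it must be extracted from induced $P_5$'s and paragliders passing through $w$ --- that is, from exactly the analysis you postponed. (ii) The atom hypothesis does not ``rule out the possibility that $V(C)$, or $V(C)$ together with the complete-to-$C$ vertices, separates $w$ from the rest of $G$'': neither of those sets is a clique ($C$ induces a $C_5$), so their being cutsets is perfectly compatible with $G$ being an atom. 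To invoke atom-ness you must exhibit an actual clique cutset --- for instance, show that the neighborhood of the component of $G\setminus N[C]$ containing $w$ would otherwise be a clique --- and producing such a clique again requires the detailed adjacency information your outline never derives.
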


\section{Structure of ($P_5$,\,paraglider)-free graphs}
In this section, we prove the following structure theorem for the class of ($P_5$, paraglider)-free graphs.
\begin{theorem}\label{thm:P5P-free}
Let $G$ be a ($P_5$,\,paraglider)-free atom with no universal or pair of comparable vertices.
 Then one of the following hold:
\begin{itemize}\itemsep=0pt
\item $G$ is an induced subgraph of the complement of the Clebsch graph.
\item $G$ is a $\overline{P_3}$-free expansion of $C_5$.
\item $G$ has a stable set $S$ such that either $S$ is good or $G\sm S$ is perfect.
\item $G\in \cal{H}$.
\end{itemize}
\end{theorem}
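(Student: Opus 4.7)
The plan is to begin from Theorem~\ref{thm:BH}, which reduces the statement to two cases. If $G=G^*$, I would verify by direct inspection of Figure~\ref{fig:GF123} that $G^*$ falls into one of the four listed outcomes. Otherwise every induced $C_5$ in $G$ is dominating, and I then split on whether $G$ actually contains an induced $C_5$. If it does not, then $G$ is $(P_5,C_5,\text{paraglider})$-free; a $P_5$-free graph has no induced cycle of length $\ge 6$, so combined with $C_5$-freeness $G$ has no odd hole, and one checks directly that every antihole $\overline{C_{2k+1}}$ with $k\ge 3$ already contains a paraglider (indeed in such an antihole the vertices $1,4,2,5$ induce a $C_4$ with $6$ adjacent to three of them), so $G$ also has no odd antihole. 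By SPGT, $G$ is perfect, and outcome~(3) holds with $S=\emptyset$.

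The main case is when $G$ contains a dominating induced $C_5$, say $C=v_1v_2v_3v_4v_5$. I would partition $V(G)\setminus V(C)$ according to the trace on $C$, writing $A_I$ for the set of vertices whose neighborhood in $C$ is $\{v_i:i\in I\}$. Domination excludes $I=\emptyset$, and the induced path $xv_1v_2v_3v_4$ rules out $|I|=1$. Using paraglider-freeness systematically (together with the auxiliary forbidden subgraphs $F_1,F_1',F_2,F_3$ of Figure~\ref{fig:GF123} as intermediate obstructions) I would narrow the admissible $I$'s drastically and then analyse, for each remaining pair $(A_I,A_J)$, whether both can be nonempty simultaneously and what the edge structure looks like within and across them. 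The atom and no-comparable-pair hypotheses would be exploited repeatedly to collapse degenerate sub-cases.

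Three outcomes then emerge naturally. If only ``consecutive-triple'' types are populated and each $A_I$ together with the appropriate $v_i$ behaves like a $\overline{P_3}$-free block, I obtain a $\overline{P_3}$-free expansion of $C_5$, giving outcome~(2). If the partition instead respects a matching between two ``opposite'' types of vertices, the pattern matches the explicit five-part description of $\mathcal{H}$, with $Q_1,Q_2$ the matched pair, $R_1,R_2$ the two residual cliques and $S$ the vertices of the antipodal type, giving outcome~(4). The remaining configurations are either forced into a specific sub-structure of the complement of the Clebsch graph (outcome~(1)) or yield a stable set that meets every maximum clique, or whose removal leaves a perfect graph (outcome~(3)). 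The main obstacle will lie in this last dichotomy: several boundary configurations look superficially like candidates for $\mathcal{H}$ but are better described by outcome~(3), and disentangling them cleanly requires a careful combined use of the atom and no-comparable-pair hypotheses together with the explicit obstructions from Figure~\ref{fig:GF123}.
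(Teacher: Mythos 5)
Your reduction is sound as far as it goes, and it follows the same route the paper takes: Theorem~\ref{thm:BH} to split off $G=G^*$, and the SPGT to produce a dominating induced $C_5$ in the remaining imperfect case. Your two preliminary verifications are correct: $G^*$ satisfies outcome~(3) (take $S=\{v_7,v_8\}$, whose removal leaves $\overline{C_6}$), and your explicit paraglider on $\{1,2,4,5,6\}$ inside $\overline{C_\ell}$ for $\ell\ge 7$, combined with the fact that $P_5$-free graphs contain no holes of length at least $6$, correctly shows that a ($P_5$,\,paraglider)-free graph with no induced $C_5$ is perfect, so outcome~(3) holds there as well.

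The gap is that everything after this point --- which is the actual content of the theorem --- is promised rather than proved. Once you have the dominating $C_5$ and the partition of $V(G)\setminus V(C)$ into classes $A_I$, your proposal says you ``would narrow the admissible $I$'s drastically,'' that ``three outcomes then emerge naturally,'' and that the dichotomy between a good/perfect stable set and membership in $\mathcal{H}$ ``requires a careful combined use'' of the hypotheses. None of this is carried out, and no mechanism is given by which the four specific outcomes (induced subgraph of the complement of the Clebsch graph, $\overline{P_3}$-free expansion of $C_5$, good or perfect-complement stable set, membership in $\mathcal{H}$) are forced. In the paper this step occupies essentially all of Section~3: Theorem~\ref{thm:C5} establishes eleven properties (R1)--(R11) of the classes $T_i$, $X_i$, $Y_i$, $Z_i$, $A$, each proved by exhibiting a $P_5$ or a paraglider, and then four theorems, organized by whether $G$ contains $F_1$, is $F_1$-free and contains $F_2$, is $(F_1,F_2)$-free and contains $F_3$, or is $(F_1,F_2,F_3)$-free (Theorems~\ref{thm:P5P-free-F1}, \ref{thm:P5PF1-free-F2}, \ref{thm:P5PF1F2-free-F3} and \ref{thm:P5PF123-free-C5}), carry out long claim-by-claim analyses; that is where the sets $Q_1,Q_2,R_1,R_2,S$ witnessing $G\in\mathcal{H}$, the bound $|V(G)|\le 16$ giving the Clebsch conclusion, and the various good stable sets are actually constructed. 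You have correctly identified the skeleton of the paper's argument, including the role of $F_1,F_1',F_2,F_3$ as intermediate obstructions, but the case analysis that constitutes the proof is exactly the part you defer; as written, the proposal is a plan for a proof, not a proof.
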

\begin{proof} If $G$ is $G^*$, then $S:=\{v_7,v_8\}$ is a stable set such that $G\sm S \cong \overline{C_6}$ is perfect. If $G$ is perfect, then any color class in a $\chi(G)$-coloring of $G$ is a good stable set. So we may assume that $G$ is not $G^*$, and is not perfect. Now since
a $P_5$-free graph contains no hole of length at least $7$, and a paraglider-free graph
contains no antihole of length at least $7$, it follows by the Strong Perfect Graph Theorem \cite{SPGT} that $G$ contains a hole of length $5$. That is, $G$ contains a $C_5$ as an induced subgraph. Now the theorem follows from Theorem~\ref{thm:BH}, and from Theorems~\ref{thm:P5P-free-F1}, \ref{thm:P5PF1-free-F2}, \ref{thm:P5PF1F2-free-F3} and Theorem \ref{thm:P5PF123-free-C5} given below.
\end{proof}

In the next theorem, we make some general observations about the
situation when a ($P_5$,\,paraglider)-free graph contains a hole (which must
have length $5$).
\begin{theorem}\label{thm:C5}
Let $G$ be any ($P_5$,\,paraglider)-free graph that contains a $C_5$ with
vertex-set $C=\{v_1,\ldots,v_5\}$ and $\{v_iv_{i+1}\mid i\in
\{1,\ldots,5\}, i\bmod 5\}$. Suppose that $G$ is an atom and has no pair of comparable vertices. Let:
\begin{eqnarray*}
T_i &=& \{x\in V(G)\sm C\mid N_C(x)=\{v_{i}, v_{i+1}, v_{i+3}\}\}. \\
X_i &=& \{x\in V(G)\sm C\mid N_C(x)=\{v_{i-1}, v_{i+1}\}\}. \\
Y_i &=& \{x\in V(G)\sm C\mid N_C(x)=\{v_{i-1}, v_{i}, v_{i+1}\}\}.\\
Z_i &=& \{x\in V(G)\sm C\mid N_C(x)= C\setminus \{v_i\}\}.\\
A &=& \{x\in V(G)\sm C\mid N_C(x)=C\}.
\end{eqnarray*}
Moreover, let $T = T_1\cup\cdots\cup T_5$, $X =
X_1\cup\cdots\cup X_5$, $Y = Y_1\cup\cdots\cup Y_5$, and $Z = Z_1\cup\cdots\cup Z_5$. Then the following properties hold for all $i$, $i \bmod 5$:
\begin{enumerate}[(R1)]\itemsep=1pt
\item\label{V} $G[C]$ is a dominating induced subgraph of $G$ and $V(G) = C\cup A \cup T\cup X \cup Y \cup Z$.

\item\label{Ti}(a) $|T_i|\le 1$. If $|T_i|= 1$, then we denote $T_i$ by $\{t_i\}$.\\ (b) $[T_i, T_j]=\emptyset$, for every $j$; so $T$ is an independent set.\\    (c) $[T_i, X_{i+3}]$ is complete, and $[T_i, X_j]=\es$, for every $j\neq i+3$.

\item\label{Xi}  (a) $X_i$ is an independent set.\\ (b) $[X_i, X_{i+1}]$ is complete.\\  (c) $|[X_{i}, X_{i+2}]|\le 1$. \\(d) If $X_{i+1}\neq \es$, then $[X_{i}, X_{i+2}]=\es$.

\item\label{Yi} (a) $G[Y_i]$ is $(K_2\cup K_1)$-free. Hence $G[Y_i]$ is a complete multi-partite graph.\\ (b) $[Y_i, Y_{i+1}]$ is complete.\\ (c) If $[Y_i,Y_{i+2}]\neq \es$, then $[Y_i, Y_{i+2}]$ is a matching.\\ (d) If $Y_i\neq \es$, then $Y_{i-1}$ and $Y_{i+1}$ are cliques.\\ (e) If $y\in Y_{i}$ and if $[\{y\}, Y_{i+2}]$ is complete, then $|Y_{i+2}|\le 1$. More generally, if $[Y_i, Y_{i+2}]$ is complete, then $|Y_i|\le 1$ and $|Y_{i+2}|\le 1$.

\item\label{Zi} (a) $|Z_i|\le 1$. If $|Z_i|= 1$, then we denote $Z_i$ by $\{z_i\}$.\\ (b) $[Z_i, Z_{i+1}]=\es$.\\ (c) $[Z_i, Z_{i+2}]$ is complete.

 \item\label{YZX} (a) $[X_i, Y_i\cup Y_{i+1}\cup Y_{i-1}\cup (Z\sm Z_i)]$ is complete.\\ (b)  $[X_i, Y_{i-2}\cup Y_{i+2}\cup Z_i]=\es$.

\item\label{ZiYj} (a) $[Z_i, Y_i \cup Y_{i-2}\cup Y_{i+2}]$ is complete.\\ (b)  $[Z_i, Y_{i-1}\cup Y_{i+1}]=\es$.

\item\label{AneiA}  (a)  $A$ is a clique. \\ (b) $[A, V(G)\sm (A\cup Y)]$ is complete.

\item\label{Tneqemp} Suppose that $T_i\neq \es$. Then:\\ 
 (a) The sets $T_{i-1}\cup T_{i+1}$, $Y$, $Z\sm Z_{i+3}$ are empty.\\ (b) $[X_i, X_{i+2}]$, $[X_{i+1}, X_{i-1}]$ and $[T_i, Z_{i+3}]$ are empty.

\item\label{AYi} Let $x, y\in V(G)$ and $q\in A$. Then the following hold:\\ (a) If $x\in Y_i$ and $y\in Y_{i+1}\cup Y_{i+2}$ are adjacent, then $q$ is either complete or anti-complete to $\{x, y\}$. \\
    (b) If $x\in Y_i$ and $y\in Y_{i+2}$ are not adjacent, then $q$ is adjacent one of $x$, $y$.\\
(c) If $x\in Y_i$ and $y\in Y_{i}$ are  adjacent, then $q$ is adjacent to one of $x$,  $y$.\\
(d) $\overline{N(q)} \cap Y_i$ is a stable set.

\item\label{F1freeXemp} Suppose that $G$ is $F_1$-free. Then $[X_i, X_{i+2}\cup X_{i-2}\cup Y_{i+2}\cup Y_{i-2}] =\es$, and hence
 $X=\es$.

\end{enumerate}
\end{theorem}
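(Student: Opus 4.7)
The plan is to prove all eleven items as a long but structured case analysis built around the fixed $C_5$ on $C$. Each property fails precisely when certain edges or non-edges exist among the defined sets, and these in turn force a small forbidden induced subgraph on at most five vertices — either an induced $P_5$ (typically a short subpath of $C$ extended at one or both ends by external vertices) or a paraglider (typically an induced $C_4$ that uses two consecutive $C$-vertices together with two external vertices, with an apex drawn from $C$ or from $A$). The atom and no-comparable-pair assumptions are invoked only to handle the degenerate cases in which two candidate vertices have identical neighborhoods and can be separated only by external edges; there, one argues that any such separating external vertex itself produces a $P_5$ or paraglider.

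For (R1), I first ensure that $C$ can be chosen dominating — this follows from Theorem~\ref{thm:BH} once we observe that the excluded case $G = G^*$ is handled separately — and then classify each $u\notin C$ by $|N_C(u)|$. Zero is ruled out by domination. A single neighbor $v_i$ produces the induced $P_5$ on $u,v_i,v_{i-1},v_{i-2},v_{i-3}$. Two adjacent neighbors $\{v_i,v_{i+1}\}$ yield the induced $P_5$ on $u,v_{i+1},v_{i+2},v_{i+3},v_{i+4}$. Two non-adjacent neighbors give $X$. Three consecutive neighbors give $Y$, the triples $\{v_i,v_{i+1},v_{i+3}\}$ give $T$, and the alternating triples $\{v_i,v_{i+2},v_{i+4}\}$ are excluded since the corresponding vertex together with $C$ contains a paraglider. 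Four neighbors give $Z$, five give $A$.

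Items (R2)--(R8) are then each a short local claim that I would prove by one unified template: assume the conclusion fails, pick one or two witness vertices $x,y$ from the relevant sets, split on whether $xy\in E(G)$, and in each branch exhibit a concrete five-vertex induced subgraph isomorphic to $P_5$ or paraglider on $\{x,y\}$ plus a few vertices of $C$. For example, for (R2)(a), two adjacent $t,t'\in T_i$ produce the induced paraglider with $C_4$ on $v_{i-1},v_i,t,v_{i+3}$ and apex $t'$, while two non-adjacent $t,t'\in T_i$ share their $C$-neighborhood and — after checking that any external vertex distinguishing them would itself create a $P_5$ or paraglider — are comparable. The arguments for (R3)--(R8) run along the same lines: forbidden edges extend $C$ into an induced $P_5$ along the cycle, and forbidden non-edges typically produce a paraglider whose $C_4$ combines two consecutive $C$-vertices with the two external witnesses.

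Items (R9)--(R11) use the same ingredients but require more work. For (R9), the non-emptiness of $T_i$ furnishes a vertex $t_i$ with asymmetric $C$-neighborhood $\{v_i,v_{i+1},v_{i+3}\}$, and for each $Y_j$ with $j\in\{i-2,\ldots,i+2\}$, each $Z_j\neq Z_{i+3}$, and each of $T_{i\pm 1}$, one exhibits an induced paraglider (if the witness is adjacent to $t_i$) or an induced $P_5$ (if not), the supporting cast being one or two suitably chosen $C$-vertices. For (R10), the crucial point is that a vertex $q\in A$ is adjacent to all of $C$, so any asymmetry in $q$'s adjacency to the specified pair of $Y$-vertices immediately yields a paraglider with apex $q$. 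Finally, (R11) is immediate from the shape of $F_1$ in Figure~\ref{fig:GF123}: each of the four forbidden adjacencies, together with a vertex of $X_i$ and a suitable pair of $C$-vertices, realizes $F_1$ as an induced subgraph. The main obstacle throughout is clerical — with five sets, five indices modulo~$5$, and an adjacent/non-adjacent dichotomy for every pair, the case count is sizable, and the $T$-based cases are the hardest because $T$-vertices' $C$-neighborhoods break the cyclic symmetry of the construction.
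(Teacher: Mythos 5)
Your overall route is the same as the paper's: exhaust the cases and certify each forbidden configuration by an induced $P_5$ or paraglider on the witnesses together with a few vertices of $C$, using Theorem~\ref{thm:BH} for domination and the no-comparable-pair hypothesis for vertices with identical neighbourhoods. However, two specific points are genuinely wrong or missing. First, the classification step of (R\ref{V}) contains a real error: in a $C_5$ there is no third type of triple. The set $\{v_i,v_{i+2},v_{i+4}\}$ contains the adjacent pair $v_{i+4},v_i$, so it equals $\{v_j,v_{j+1},v_{j+3}\}$ with $j=i+4$, which is exactly the neighbourhood defining $T_{i+4}$. Your claim that a vertex $u$ with $N_C(u)=\{v_i,v_{i+2},v_{i+4}\}$ ``together with $C$ contains a paraglider'' is false: the only induced $C_4$'s in $G[C\cup\{u\}]$ are on $\{u,v_i,v_{i+1},v_{i+2}\}$ and $\{u,v_{i+2},v_{i+3},v_{i+4}\}$, and no remaining vertex has three neighbours on either of them. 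In fact $G[C\cup\{u\}]$ is precisely the graph $F_1$, which the paper must allow (Theorem~\ref{thm:P5P-free-F1} is entirely about graphs containing it). Taken literally, your exclusion forces $T=\es$ in every ($P_5$,\,paraglider)-free graph, which contradicts your own treatment of $T$ in (R\ref{Ti}) and (R\ref{Tneqemp}) and would collapse the paper's later case analysis. The correct observation is simply that every $3$-subset of $C$ is either three consecutive vertices ($Y$-type) or two consecutive plus the opposite vertex ($T$-type); nothing is excluded at size three.

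Second, your treatment of (R\ref{F1freeXemp}) misses its main content. $F_1$-freeness does kill the listed adjacencies directly, as you say, but the conclusion ``hence $X=\es$'' is not immediate from the shape of $F_1$. The paper derives it by showing that any $x\in X_i$ would then have exactly the same neighbourhood as $v_i$ --- this uses $T=\es$ (another consequence of $F_1$-freeness), the first half of (R\ref{F1freeXemp}), (R\ref{Xi}), (R\ref{YZX}), (R\ref{AneiA}:b) and the domination of $C$ --- and only then invokes the hypothesis that $G$ has no pair of comparable vertices. Your sketch never performs this deduction, and the generic remark in your preamble about comparability does not substitute for it. A smaller instance of the same looseness occurs in (R\ref{Ti}:a): the non-adjacent case needs no comparability detour at all, since two non-adjacent $x,y\in T_i$ already give a paraglider on $\{x,v_i,y,v_{i+3},v_{i+1}\}$ (the $4$-cycle $x$-$v_i$-$y$-$v_{i+3}$ with apex $v_{i+1}$); the ``check'' you defer there is thus both unverified as written and, once carried out, unnecessary.
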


\begin{proof} Let $G$ be the given graph with vertex-set $V$ and edge-set $E$.

 \medskip
\noindent{Proof of (R$\ref{V}$).} Since $G$ has no clique cut-set, by Theorem~\ref{thm:BH}, $G[C]$ is dominating, and so every vertex in $V\sm C$ has a neighbor in $C$. Now (R\ref{V}) follows since $G$ is $P_5$-free. Indeed if a vertex $x\in V\sm C$ has exactly one neighbor (say, $v_i$) or has exactly two neighbors that are consecutive (say, $v_i$ and $v_{i+1}$) in $C$, then $x$-$v_{i}$-$v_{i-1}$-$v_{i-2}$-$v_{i-3}$ is a $P_5$. \hfill{$\diamond$}

\medskip
\noindent{Proof of (R$\ref{Ti}$).}~$(a)$:~Otherwise, for any two vertices $x$ and $y$ in $T_i$, either $\{x,v_{i+1},$ $v_{i+2},v_{i+3},y\}$
or $\{x,v_i,y,v_{i+3},v_{i+1}\}$ induces a paraglider. So (a) holds.  \\
$(b)$:~Suppose to the contrary that there are adjacent vertices $x\in T_i$ and $y\in T_j$.  Now if $j\neq i-1$, then $\{x,v_{i+1},v_{i+2},v_{i+3},y\}$ induces a paraglider, and if  $j= i-1$, then $\{y,v_{i},v_{i+1},v_{i+2},x\}$ induces a paraglider, a contradiction. This proves item~$(b)$.\\
$(c)$:~Pick a vertex $x\in T_i$ and a vertex $y\in X_j$. Up to symmetry, we may assume that $j\in \{i,i+2,i+3\}$.
  If $j=i$, then $xy\notin E$, for otherwise $\{y,v_{i+1},v_i,v_{i-1},x\}$ induces a paraglider.
If $j=i+2$,  then $xy\notin E$, for otherwise $\{x,v_{i+1},v_{i+2},v_{i+3},y\}$ induces a paraglider.
If $j=i+3$, then $xy\in E$, for otherwise $v_i$-$x$-$v_{i+3}$-$v_{i+2}$-$y$ is a $P_5$.   Since this
holds for any $x$ and $y$, it proves item~(c). \hfill{$\diamond$}

\medskip
\noindent{Proof of (R$\ref{Xi}$).}~$(a)$:~Otherwise, for any two adjacent vertices $x$ and $y$ in $X_i$, $\{x,v_{i+1},v_{i},v_{i-1},y\}$
induces a paraglider. So (a) holds. \\
$(b)$:~Suppose not, and let $x\in X_i$ and $y\in X_{i+1}$ be not adjacent. Then $x$-$v_{i-1}$-$v_{i-2}$-$v_{i-3}$-$y$ is a $P_5$. So (b) holds. \\
$(c)$:~We may assume that $[X_i,X_{i+2}]\neq \es$. We first claim that $[X_i,X_{i+2}]$ is a matching. Suppose not. Then, up to symmetry, we may assume that there exist vertices $x\in X_i$ and $y,z\in X_{i+2}$ such that $xy,xz\in E$. By (a), $yz\notin E$. But then $\{x,z,v_{i+3},y,v_{i+1}\}$ induces a paraglider. So $[X_i,X_{i+2}]$ is a matching. Now, if $|[X_i,X_{i+2}]|\ge 2$, then there exist matching edges $e,f\in [X_i,X_{i+2}]$, say $e:=xy$ and
$f:=x'y'$ with $x,x'\in X_i$ and $y,y'\in X_{i+2}$. By (a), we have $xx'\notin E$ and $yy'\notin E$.  But then $y$-$x$-$v_{i-1}$-$x'$-$y'$ is a $P_5$, a contradiction. This proves item~(c). \hfill{$\diamond$}

\medskip
\noindent{Proof of (R$\ref{Yi}$).}~$(a)$:~ Suppose to the contrary that $G[Y_i]$ contains an induced $K_2\cup K_1$ with vertex-set $\{x,y,z\}$ and edge-set $\{xy\}$. Then $\{x, v_{i-1},z,v_{i+1},y\}$ induces a paraglider, which is a contradiction. So (a) holds.  \\
$(b)$:~Suppose not, and let $x\in Y_i$ and $y\in Y_{i+1}$ be not adjacent. Then $x$-$v_{i-1}$-$v_{i-2}$-$v_{i-3}$-$y$ is a $P_5$. So (b) holds. \\
$(c)$:~Suppose not. We may assume, up to symmetry, that $x\in Y_i$ and $y,z\in Y_{i+2}$ such that $xy,xz\in E$. Then $\{x,y,v_{i-2},v_{i-1},z\}$ or
$\{x,y,v_{i-2},z,v_{i+1}\}$ induces a paraglider, a contradiction. This proves item~(c).\\
$(d)$:~Let $x\in Y_i$. Suppose to the contrary  that there are non-adjacent vertices $y$ and $z$ in $Y_{i+1}$.  By (b), $xy,yz\in E$. But then $\{x,y,v_{i+2},z,v_i\}$ induces a paraglider which is a contradiction. So $Y_{i+1}$ is a clique. Likewise, $Y_{i-1}$ is a clique. This proves item~(d). \\
$(e)$:~ This follows by item~(c). \hfill{$\diamond$}

\medskip
\noindent{Proof of (R$\ref{Zi}$).}~$(a)$:~Otherwise, for any two vertices $x$ and $y$ in $Z_i$, either $\{v_i,v_{i+1},$ $x,v_{i-1},y\}$
or $\{x,v_{i+1},y,v_{i-1},v_{i+2}\}$ induces a paraglider.   \\
$(b)$:~Suppose not, and let $x\in Z_i$ and $y\in Z_{i+1}$ be   adjacent. Then $\{v_i,v_{i+1},$ $v_{i+2},y,x\}$ induces a paraglider.  \\
$(c)$:~Suppose not, and let $x\in Z_i$ and $y\in Z_{i+2}$ be not adjacent. Then $\{v_i,v_{i+1},$ $x,v_{i-1},y\}$ induces a paraglider.
 \hfill{$\diamond$}

\medskip
\noindent{Proof of (R$\ref{YZX}$).}~$(a)$:~Suppose not, and  let $x\in X_i$ and $y\in Y_i\cup Y_{i+1}\cup Y_{i-1}\cup (Z\sm Z_i)$ be non-adjacent. By symmetry, we may assume that $y\in Y_i\cup Y_{i+1}\cup Z_{i-1}\cup Z_{i+2}$. Now if $y\in Y_i\cup Z_{i+2}$, then $\{x,v_{i+1},y,v_{i-1},v_i\}$ induces a paraglider, and if $y\in Y_{i+1}\cup Z_{i-1}$, then $x$-$v_{i-1}$-$v_i$-$y$-$v_{i+2}$ is a $P_5$, a contradiction. This proves item~(a).\\
$(b)$:~Suppose not, and let $x\in X_i$ and $y\in Y_{i-2}\cup Y_{i+2}\cup Z_i$ be adjacent. By symmetry, we may assume that $y\in Y_{i+2}\cup Z_i$.
Now if $y\in Y_{i+2}$, then $v_i$-$v_{i-1}$-$x$-$y$-$v_{i+2}$ is a $P_5$, and if $y\in Z_i$, then $\{x,v_{i+1},v_i,v_{i-1},y\}$ induces a paraglider, a contradiction. This proves item~(b). \hfill{$\diamond$}

\medskip
\noindent{Proof of (R$\ref{ZiYj}$).}~$(a)$:~Suppose not. Up to symmetry, we may assume that there are non-adjacent vertices $x\in Z_i$ and $y\in Y_i\cup Y_{i+2}$. Now if $y\in Y_i$, then $\{v_i,v_{i+1},x,v_{i-1},y\}$ induces a paraglider, and if $y\in Y_{i+2}$, then $v_i$-$v_{i-1}$-$x$-$v_{i+2}$-$y$ is a $P_5$, a contradiction. This proves item~(a).\\
$(b)$:~Suppose not, and let $x\in Z_i$ and $y\in Y_{i+1}\cup Y_{i-1}$ be adjacent. Then $\{v_i,v_{i+1},x,v_{i-1},y\}$ induces a paraglider. \hfill{$\diamond$}

\medskip
\noindent{Proof of (R$\ref{AneiA}$).} Suppose not, and let $x\in A$ and $y\in A \cup (V(G)\sm Y)$ be non-adjacent.
 If $y\in A$, then $\{x,v_i,y,v_{i+3},v_{i+1}\}$ induces a paraglider. So let us assume that  $y\in V(G)\sm Y$.
 Then there exist $j\in \{1,\ldots,5\}$, $j$ modulo $5$~such that $yv_j,yv_{j+2}\in E$ and $yv_{j+1}\notin E$. But then
 $\{y,v_j,v_{j+1},v_{j+2},x\}$ induces a paraglider, a contradiction. \hfill{$\diamond$}

\medskip
\noindent{Proof of (R$\ref{Tneqemp}$).} Let $x\in T_i$.\\
$(a)$:~Suppose to the contrary that there exists a vertex $y\in T_{i+1}\cup T_{i-1}\cup Y\cup (Z\sm Z_{i+3})$.
First suppose that $y\in T_{i+1}\cup Y_i \cup Z_i$.  Then since $y$-$v_{i}$-$x$-$v_{i+3}$-$v_{i+2}$ or
$x$-$v_{i}$-$v_{i-1}$-$y$-$v_{i+2}$ is not a $P_5$, we have $xy\in E$. But then $\{v_i,v_{i+1},y,v_{i-1},x\}$ or $\{x,v_i,v_{i-2},v_{i-1},y\}$ induces a a paraglider.  So $y\notin T_{i+1}\cup Y_i \cup Z_i$. Likewise, $y\notin T_{i-1}\cup Y_{i+1}\cup Z_{i+1}$.
Next suppose that $y\in Y_{i+2}$. Then since $\{x,v_{i+1},v_{i+2},v_{i+3},y\}$ does not induce a paraglider, we have $xy\in E$. But then $v_{i-1}$-$v_i$-$x$-$y$-$v_{i+2}$ is a $P_5$. So  $y\notin Y_{i+2}$. Likewise, $y\notin Y_{i-1}$.
Next suppose that $y\in Y_{i-2}$. Then since $\{x,v_i,v_{i-1},v_{i-2},y\}$ does not induce a paraglider, $xy\notin E$. But then $x$-$v_{i}$-$v_{i-1}$-$y$-$v_{i+2}$ is a $P_5$. So $y\notin Y_{i-2}$. Finally, suppose that $y\in Z_{i+2}\cup Z_{i-1}$. Up to symmetry, we may assume that $y\in Z_{i+2}$. Then since $\{x,v_i,v_{i-1},v_{i-2},y\}$ does not induce a paraglider, we have $xy\in E$. But then $\{x,v_{i+1},v_{i+2},v_{i+3},y\}$ induces a paraglider, a contradiction. This proves item~(a).\\
$(b)$:~Suppose that there is an edge $yz$ in one of the listed sets.  If $y\in X_i$ and $z\in X_{i+2}$, then by (R\ref{Ti}:c), we have $xy,xz\notin E$; and then $z$-$y$-$v_{i-1}$-$v_i$-$x$ is a $P_5$.  If $y\in X_{i+1}$ and $z\in X_{i-1}$, then by (R\ref{Ti}:c), we have $xy,xz\notin E$; and then $x$-$v_{i+1}$-$v_{i+2}$-$y$-$z$ is a $P_5$. If $y\in T_i$ and $z\in Z_{i+3}$, then $\{y,v_i,v_{i-1},v_{i-2},z\}$ induces a paraglider. These contradictions show that (b) holds.   \hfill{$\diamond$}

\medskip
\noindent{Proof of (R$\ref{AYi}$).}~$(a)$:~Suppose not. Up to symmetry, we may assume that $qx\in E$ and $qy\notin E$. Then either $\{q,x,y,v_{i+2},v_i\}$ or $\{v_{i-1},x,y,v_{i-2},q\}$ induces a paraglider, a contradiction.  So (a) holds. \\
$(b)$:~Otherwise, $x$-$v_{i}$-$q$-$v_{i+2}$-$y$ is a $P_5$.\\
$(c)$:~Otherwise, $\{x,v_{i+1},q,v_{i-1},y\}$ induces a paraglider.\\
$(d)$:~This follows by item~(c).  \hfill{$\diamond$}

\medskip
\noindent{Proof of (R$\ref{F1freeXemp}$).}~Suppose to contrary that there are adjacent vertices $x\in X_i$ and $y\in X_{i+2}\cup X_{i-2}\cup Y_{i+2}\cup Y_{i-2}$. We may assume, up to symmetry, that $y\in X_{i+2}\cup Y_{i+2}$. Now $\{v_i,v_{i+1},y,v_{i-2},v_{i-1},x\}$ induces an $F_1$, a contradiction. So, $[X_i, X_{i+2}\cup X_{i-2}\cup Y_{i+2}\cup Y_{i-2}]=\es$.

Now we show that $X=\es$. Suppose to the contrary that $X\neq \es$ and let $x\in X$, say $x\in X_i$ for some $i$. We claim that $x$ and $v_i$ are comparable.  Since $G$ is $F_1$-free, $T=\es$. Now by the preceding point, by the definition of $X_i$, and by (R\ref{Xi}),(R\ref{YZX}), (R\ref{AneiA}:b), and since $G[C]$ is dominating, we see that $N_G(x)=\{v_{i+1},v_{i-1}\}\cup X_{i+1}\cup X_{i-1}\cup Y_i \cup Y_{i+1}\cup Y_{i-1}\cup (Z\sm Z_i)\cup A$, and $\overline{N_G(x)}=\{v_{i+2},v_{i-2}\}\cup (X_{i}\sm \{x\})\cup X_{i-2}\cup X_{i+2}\cup Y_{i+2}\cup Y_{i-2}\cup Z_i$.
So, $N_G(x)= N_G(v_i)$ and $\overline{N_G(x)}=\overline{N_G(v_i)}$, and hence we  conclude that $x$ and $v_i$ are comparable, a contradiction. So (R$\ref{F1freeXemp}$) holds. \hfill{$\diamond$}

This completes the proof of Theorem~\ref{thm:C5}.
\end{proof}

\begin{theorem}\label{thm:P5P-free-F1}
Let $G$ be a ($P_5$,\,paraglider)-free atom with no universal vertex.
Suppose that $G$ contains   $F_1$. Then $G$ has a stable set $S$ such that $G\sm S$ is a bipartite graph or a bull. In particular, $G\sm S$ is perfect.
\end{theorem}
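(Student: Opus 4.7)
The plan is to apply Theorem~\ref{thm:C5} to an induced $C_5$ sitting inside a copy of $F_1$, and then use property (R\ref{Tneqemp}) to trim the structure enough that a suitable stable set $S$ becomes visible.

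First note that $F_1$ itself contains an induced $C_5$ whose sixth vertex attaches to exactly two adjacent cycle-vertices plus the unique vertex at distance two from both of them --- precisely the neighbourhood pattern that defines the sets $T_i$ of Theorem~\ref{thm:C5}. Hence from any induced $F_1$ in $G$ I extract an induced $C_5$, say $C = v_1\cdots v_5v_1$, and a vertex $t_1 \in V(G)\sm C$ with $N_C(t_1) = \{v_1,v_2,v_4\}$; after relabeling, $T_1 \neq \es$. (If $G$ has a pair of comparable vertices I may first remove one of them without affecting the hypothesis or the conclusion, so I may also assume no such pair exists and that Theorem~\ref{thm:C5} applies.) Applying the theorem to $C$, property (R\ref{Tneqemp}) immediately yields $T_2 = T_5 = \es$, $Y = \es$, $Z \subseteq Z_4$, together with the edge restrictions $[X_1,X_3] = \es = [X_2,X_5]$ and $[T_1,Z_4] = \es$. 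If additionally $T_3 \neq \es$ then (R\ref{Tneqemp}) applied at $i=3$ forces $T_4 = \es$ and $Z = \es$; symmetrically for $T_4 \neq \es$. Thus all $Y_j$ are empty, at most one of $T_3,T_4$ survives, and $Z \subseteq Z_4$ (with $Z = \es$ whenever $T_3\cup T_4\neq \es$). The remaining vertex set is $C \cup A \cup T \cup X \cup Z_4$, with the $X_i$'s governed by (R\ref{Xi}), (R\ref{Ti}c), (R\ref{YZX}), and $A$ governed by (R\ref{AneiA}).

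A short case analysis on which of $T_3$, $T_4$, $Z_4$, $A$ are non-empty should then finish the proof. A natural candidate for the stable set is $S = \{v_3\}$ augmented by any surviving vertex of $T_3\cup T_4$ (and by one vertex of $Z_4$ if $Z_4\neq \es$); by the restrictions just listed this set is indeed stable. In each sub-case I expect $G\sm S$ to split into the two classes $\{v_1,v_2\}\cup X_1\cup X_2\cup\{t_1\}\cup A$ and $\{v_4,v_5\}\cup X_3\cup X_4\cup X_5\cup Z_4$, with every surviving edge running between the two classes, so that $G\sm S$ is bipartite. The sole exception, in which each $X_i$ as well as $A$ and $Z_4$ is empty, leaves only the six vertices of $F_1$ itself, and then removing any single non-triangle vertex produces a bull.

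The main obstacle is the bookkeeping required to verify, across all sub-cases, that the chosen $S$ is stable and that no odd cycle survives in $G\sm S$. The key simplification, handed to us almost for free by (R\ref{Tneqemp}), is that $Y$ and four of the five $Z_j$ vanish in one stroke, so the only potential sources of odd cycles are edges among the $X_i$'s, the surviving $T$-vertices and the $C$-vertices, all of which have explicit descriptions from (R\ref{Ti})--(R\ref{AneiA}).
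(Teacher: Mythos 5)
Your opening moves match the paper's: extract from the $F_1$ an induced $C_5$ together with a vertex $t_1\in T_1$, invoke Theorem~\ref{thm:C5}, and use (R\ref{Tneqemp}) to get $Y=\es$, $Z\subseteq Z_4$ and $[X_1,X_3]=[X_2,X_5]=[T_1,Z_4]=\es$. But the construction you then propose fails concretely, and it fails at the only step that required real work. First, your set $S$ is not stable: every vertex of $Z_4$ is adjacent to all of $C\sm\{v_4\}$, in particular to $v_3$, and every vertex of $T_3$ is adjacent to $v_3$ (its cycle-neighborhood is $\{v_3,v_4,v_1\}$), so $\{v_3\}$ cannot be augmented by a vertex of $Z_4$ or of $T_3$. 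Second, your two color classes are not stable sets: $\{v_1,v_2\}\cup X_1\cup X_2\cup\{t_1\}\cup A$ contains the cycle edge $v_1v_2$, the edges $t_1v_1$ and $t_1v_2$, the edges from $X_1$ to $v_2$ and from $X_2$ to $v_1$, and all of $[X_1,X_2]$, which is complete by (R\ref{Xi}:b); likewise $\{v_4,v_5\}\cup X_3\cup X_4\cup X_5\cup Z_4$ contains $v_4v_5$, the complete sets $[X_3,X_4]$ and $[X_4,X_5]$, and the edges from $Z_4$ to $v_5$. Placing two consecutive cycle vertices on the same side can never work. Third, $A$ cannot be ``governed by (R\ref{AneiA})'' and left inside a class: once $Y=\es$, (R\ref{AneiA}) makes every vertex of $A$ universal, so one must invoke the no-universal-vertex hypothesis to conclude $A=\es$ before any coloring argument starts.

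What is missing is precisely the content of the paper's proof. The viable bipartition pairs \emph{non-adjacent} cycle vertices with their ``clone'' sets $X_i$, and it needs adjacency facts your sketch never establishes. If a second $T$-vertex $t_3\in T_3$ exists (i.e.\ $G$ contains $F_1'$), then (R\ref{Tneqemp}:b) applied at $i=1$ and $i=3$ gives $[X_1,X_3]=[X_2,X_4]=\es$, and $S=\{t_1,t_3,v_5\}\cup X_5$ works with classes $\{v_1,v_3\}\cup X_1\cup X_3$ and $\{v_2,v_4\}\cup X_2\cup X_4$. If instead $T=\{t_1\}$, one must first prove $[X_2,X_4]=\es$ and $[X_3,X_5]=\es$ by hand (the paper exhibits a forbidden seven-vertex configuration on the modified cycle $v_1$-$v_2$-$v_3$-$x_4$-$v_5$), and then split on $Z_4$: if $Z_4=\es$, take $S=\{t_1,v_3,v_5\}\cup X_3\cup X_5$ with classes $\{v_1\}\cup X_1$ and $\{v_2,v_4\}\cup X_2\cup X_4$; if $Z_4=\{z_4\}$, the paper shows via explicit $P_5$'s that $X_j=\es$ for all $j\neq 4$, and then $S=\{z_4,v_4\}\cup X_4$ leaves exactly the bull $G[\{t_1,v_5,v_1,v_2,v_3\}]$ --- this is where the bull outcome genuinely comes from, and no choice of classes avoids the case split, since $z_4$ is adjacent to $v_1,v_2,v_3,v_5$ and to all of $X_1\cup X_2\cup X_3\cup X_5$. (Your fallback is also wrong on $F_1$ itself: deleting $v_3$ or $v_5$ leaves a six-edge graph containing a triangle, not a bull; only deleting $v_4$ works.) Finally, your parenthetical reduction to the comparable-pair-free case is unjustified as stated: deleting one vertex of a comparable pair can destroy atomhood and the presence of $F_1$, can create a universal vertex, and the bull conclusion for the smaller graph need not lift back; the paper sidesteps this because Theorem~\ref{thm:P5P-free-F1} is only invoked, inside Theorem~\ref{thm:P5P-free}, for graphs already assumed to have no comparable pair.
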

\begin{proof} Let $G$ be the given graph with vertex-set $V$ and edge-set $E$.
First suppose that $G$ contains an $F_1'$.  Consider the graph $F_1'$ as shown in Figure~\ref{fig:GF123} and let
$C=\{v_1,\ldots, v_5\}$.  We use the same notation as in
Theorem~\ref{thm:C5} and use the properties in Theorem~\ref{thm:C5}.  Then by (R\ref{Ti}:a), $T_1 = \{t_1\}$ and $T_3=\{t_3\} $.
Moreover, by (R\ref{Tneqemp}:a), $T=\{t_1, t_3\}$, $Y=\es$ and $Z=\es$.
Then since $Y=\es$,   any vertex in $A$ is a universal vertex of $G$ (by (R\ref{AneiA})), and hence $A=\es$.
Also, by (R\ref{Tneqemp}:b), $[X_1,X_3]=\es$ and $[X_2,X_4]=\es$.
Now, let us define  $S:= \{t_1, t_3, v_5\} \cup X_5$, $S_1:=\{v_1,v_3\}\cup X_1\cup X_3$, and $S_2:=\{v_2,v_4\}\cup X_2\cup X_4$.
Then by (R\ref{Xi}:a)  and (R\ref{Ti}:c), the set $S:= \{t_1, t_3, v_5\} \cup X_5$ is a stable set. Also, by the preceding points and (R\ref{Xi}:a), we see that $V(G)\sm S = S_1\cup S_2$, and $S_1$ and $S_2$ are stable sets. Hence $G\sm S$ is bipartite.

Suppose that $G$ contains no $F_1'$. Consider the graph $F_1$ as shown in Figure~\ref{fig:GF123} and let
$C=\{v_1,\ldots, v_5\}$.  We use the same notation as in
Theorem~\ref{thm:C5} and use the properties in Theorem~\ref{thm:C5}.  Since $G$ has no $F_1'$, by (R\ref{Ti}:a), $T = \{t_1\}$.  Then by (R\ref{Tneqemp}), the sets $Y$, $Z\sm Z_4$, $[X_2,X_5]$ and $[\{t_1\}, Z_4]$ are empty.  Then since $Y=\es$,   any vertex in $A$ is a universal vertex of $G$ (by (R\ref{AneiA})), and hence $A=\es$. Also, if there are adjacent vertices $x_2\in X_2$ and $x_4\in X_4$, then $\{v_1,v_2,v_3,x_4,v_5,t_1,x_2\}$ induces an $F_2$.
So $[X_2,X_4]=\es$. Likewise, $[X_3,X_5]=\es$.

Suppose that $Z_4=\es$. Then let us define  $S:= \{t_1, v_3, v_5\}\cup X_3\cup X_5$, $S_1:=\{v_1\}\cup X_1$, and $S_2:=\{v_2,v_4\}\cup X_2\cup X_4$.
Then by (R\ref{Xi}:a)  and (R\ref{Ti}:c), the set $S:= \{t_1, t_3, v_5\} \cup X_5$ is a stable set. Also, by the preceding points and (R\ref{Xi}:a), we see that $V(G)\sm S = S_1\cup S_2$, and $S_1$ and $S_2$ are stable sets. Hence $G\sm S$ is bipartite.

 So let us assume that $Z_4\neq \es$, and by (R\ref{Zi}:a), $Z_4=\{z_4\}$. Then by (R\ref{Tneqemp}:b), $t_1z_4\notin E(G)$. Now we claim that $X_j=\es$, for $j\neq 4$. Suppose not. Up to symmetry, we may assume that there exists a vertex $x\in X_1\cup X_3$. If $x\in X_1$, then by (R\ref{Ti}:c) and (R\ref{YZX}:a), we have $t_1x\notin E$ and $xz_4\in E$. But then $v_4$-$t_1$-$v_1$-$z_4$-$x$ is a $P_5$.   If $x\in X_3$, then since $\{v_2,v_3,v_4,x,z_4\}$ does not induce a paraglider, $xz_4\notin E$. But then $x$-$v_4$-$v_3$-$z_4$-$v_1$ is a $P_5$. So, we conclude that $X_j=\es$, for $j\neq 4$.  Now, by (R\ref{Xi}:a) and (R\ref{YZX}:b), the set $S:=\{z_4,v_4\}\cup X_4$ is a stable set such that $G\sm S:=G[\{t_1,v_5,v_1,v_2,v_3\}]$ is a bull.  This completes the proof of the theorem.
\end{proof}

\begin{theorem}\label{thm:P5PF1-free-F2}
Let $G$ be a ($P_5$,\,$F_1$,\,paraglider)-free atom with no  universal or pair of comparable vertices. Suppose that $G$ contains $F_2$.  Then one of the following hold:
\begin{itemize}\itemsep=0pt
\item $G$ is an induced subgraph of the complement of the Clebsch graph.
\item   $G$ has a good stable set.
\item  $G \in \cal{H}$.
\end{itemize}
\end{theorem}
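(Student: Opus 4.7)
I would run the structural machinery of Theorem~\ref{thm:C5} on a carefully chosen $C_5$ sitting inside the given copy of $F_2$, use the hypothesis ``$F_1$-free'' to strip away most of the pieces, and then read the remaining configuration as an $\mathcal{H}$-decomposition (or else as a small sporadic graph). Explicitly, pick any induced $C_5$, say $C=\{v_1,\ldots,v_5\}$, and consider the associated partition $V(G)=C\cup A\cup T\cup X\cup Y\cup Z$. Since $F_1$ is an induced $C_5$ together with one extra vertex attached to a triple $\{v_i,v_{i+1},v_{i+3}\}$, any non-empty $T_i$ would produce an $F_1$; so $T=\emptyset$. By (R\ref{F1freeXemp}) we also get $X=\emptyset$, leaving $V(G)=C\cup A\cup Y\cup Z$.

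\textbf{Deploying the $F_2$.} I would choose the $C_5$ to lie inside the prescribed copy of $F_2$, so that the remaining vertices of $F_2$ fall into $A\cup Y\cup Z$; their neighbourhoods on $C$ then dictate exactly which of the $Y_i$'s and $Z_i$'s are non-empty, and in particular force the existence of a matched pair $x\in Y_i$, $y\in Y_{i+2}$ with $xy\in E(G)$ (recall (R\ref{Yi}:c)). From this seed I would split into cases based on the non-empty $Y_i$'s, $Z_j$'s, and the size of $A$:
\begin{itemize}\itemsep=0pt
\item \emph{Generic case.} When the matching between some $Y_i$ and $Y_{i+2}$ has size $k\ge 2$, set $Q_1:=Y_i$, $Q_2:=Y_{i+2}$, build $R_1,R_2$ from $Y_{i-1},Y_{i+1}$, the opposite $C$-vertices, and the neighbouring $Z_j$'s, and set $S:=A\cup(\text{certain }Z_j\text{'s})$. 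The completeness/anti-completeness requirements of $\mathcal{H}$ are supplied by (R\ref{YZX}), (R\ref{ZiYj}), (R\ref{AneiA}), and the injective map $f:S\to\{1,\ldots,k\}$ is delivered by (R\ref{AYi}): each $q\in A$ has its non-neighbourhood inside $Y_i\cup Y_{i+2}$ stable, so it misses at most one matched pair.
\item \emph{Degenerate case.} When the matching in (R\ref{Yi}:c) has size at most one, (R\ref{Yi}:e) forces $|Y_i|,|Y_{i+2}|\le 1$, so $G$ becomes very constrained. Either an explicit stable set (typically a single $Y_j$, or a pair $\{v_j,z_j\}$) meets every maximum clique, yielding a good stable set, or the vertex count is so small that direct enumeration (using the recognition of $\overline{\text{Clebsch}}$ from its local data) shows $G$ embeds into $\overline{\text{Clebsch}}$.
\end{itemize}

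\textbf{Main obstacle.} The hardest step is the generic $\mathcal{H}$-reconstruction: the definition of $\mathcal{H}$ is quite rigid, demanding a perfect matching between $Q_1$ and $Q_2$ and a one-to-one assignment $f$ with precisely the prescribed missing edges. Verifying these from the local Theorem~\ref{thm:C5} rules requires carefully distinguishing which of $A$, the $Z_j$'s, and the remaining $Y_j$'s contribute to $S$ and which to $R_1\cup R_2$, and this is where most of the case analysis will pile up, in particular when several $Y_j$'s are simultaneously non-empty and interact through the complete/matching/anti-complete trichotomies of (R\ref{Yi}). A secondary difficulty is certifying that the residual small configurations in the degenerate case really are induced subgraphs of $\overline{\text{Clebsch}}$; this I expect to handle by an ad hoc check once the previous steps have already cut the possibilities down to a short list.
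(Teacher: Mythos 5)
Your skeleton does match the paper's: run Theorem~\ref{thm:C5} on the $C_5$ inside the given $F_2$, use $F_1$-freeness to get $T=\emptyset$ and (R\ref{F1freeXemp}) to get $X=\emptyset$, and grow an $\mathcal{H}$-partition out of the matching $[Y_2,Y_5]$, with Clebsch/good-stable-set outcomes absorbing degenerate configurations. But two steps of your plan fail as stated. First, you put all of $A$ into $S$ on the grounds that each $q\in A$ ``misses at most one matched pair.'' At most one includes zero, and this is a genuine case, not a technicality: the paper splits $A$ into $A'$ (vertices anticomplete to exactly one pair $\{a_i,b_i\}$) and $A''$ (vertices complete to all pairs), and a vertex $x\in A''$ fits nowhere in an $\mathcal{H}$-partition --- it cannot go into $S$ because the injective function $f$ requires every $S$-vertex to miss a pair, and it cannot go on either side $Q_1\cup R_1$ or $Q_2\cup R_2$ because it is complete to both matched cliques. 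When $A''\neq\emptyset$ the conclusion itself changes: the paper proves (Claim~\ref{A2Y2Y5}) that such an $x$ is complete to $Y_2$ or to $Y_5$ and exhibits the good stable set $\{x\}\cup(\overline{N(x)}\cap Y_2)$; only after reducing to $A''=\emptyset$ does the $\mathcal{H}$-construction proceed. Relatedly, injectivity of $f$ is not ``delivered by (R\ref{AYi})'': you need $|A_i'|\le 1$, which requires a separate paraglider argument (Claim~\ref{AY134}(i)). Also $Z$-vertices can never lie in $S$ (a $Z_3$- or $Z_4$-vertex is anticomplete to all of $Y_2$ resp.\ $Y_5$ by (R\ref{ZiYj}:b), so it misses every matched pair); in the paper $S=\{v_1\}\cup A'$, with $v_1$ assigned to the extra matched pair $\{v_3,v_4\}$.

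Second, your recipe ``build $R_1,R_2$ from $Y_{i-1},Y_{i+1}$, the opposite $C$-vertices and the neighbouring $Z_j$'s'' cannot work for $Y_1,Y_3,Y_4$: by (R\ref{Yi}:b), $Y_1$ is complete to both $Y_2$ and $Y_5$, so placing it in $R_1$ or $R_2$ violates $[Q_1\cup R_1,R_2]=\emptyset$ and $[Q_2\cup R_2,R_1]=\emptyset$; similarly $Y_3$ and $Y_4$ are complete to $\{b_1,\ldots,b_k\}$ resp.\ $\{a_1,\ldots,a_k\}$ (Claim~\ref{Y2Y5}(ii)) yet anticomplete to the unmatched vertices on that same side, so they straddle both sides. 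These sets cannot be packed into the partition at all; the paper must show they are empty in the $\mathcal{H}$ outcome, and this is where the bulk of the work lies (Claims~\ref{Y1Y3Y4}, \ref{Y2Y5-Z}, \ref{Y1leq1}, \ref{Y1-Z2Z5A'}), each nonempty case being discharged by an explicit good stable set. The actual $R_1,R_2$ are built from the \emph{unmatched} parts $Y_2',Y_5'$ together with $v_2,v_5$ and $Z_4,Z_3$ --- another point where your assignment $Q_1:=Y_i$, $Q_2:=Y_{i+2}$ is off, since $[Q_1,Q_2]$ must be a \emph{perfect} matching and $Y_2$ need not even be a clique. Finally, your degenerate case is miscalibrated: a matching of size $1$ does not make $G$ small (the unmatched sets $Y_2',Y_5'$ can be arbitrarily large and $G$ still lands in $\mathcal{H}$, because $v_3,v_4$ supply a second matched pair); the Clebsch outcome arises precisely when $Y_2=\{a_1\}$ and $Y_5=\{b_1\}$, where the cardinality bounds $|Y_3|,|Y_4|,|Y_1|,|Z_i|,|A_1'|\le 1$ cap $|V(G)|$ at $16$. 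So before the reconstruction you describe can be carried out, the plan needs the $A'/A''$ dichotomy and the emptiness arguments for $Y_1,Y_3,Y_4$ (and $Z_1,Z_2,Z_5$), which are exactly the missing ideas.
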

\begin{proof} Let $G$ be the given graph with vertex-set $V$ and edge-set $E$.
Consider the graph $F_2$ as shown in Figure~\ref{fig:GF123} and let
$C=\{v_1,\ldots, v_5\}$.  We use the same notation as in
Theorem~\ref{thm:C5} and use the properties in Theorem~\ref{thm:C5}.
 So $y_2\in Y_2$ and $y_5\in Y_5$, and $y_2y_5\in [Y_2, Y_5]$. Let $\{a_1b_1,a_2b_2,\ldots,a_kb_k\}$ denote the edges of $[Y_2,Y_5]$, and let $a_1b_1:= y_2y_5$. Moreover, let $Y_2^*:= Y_2\sm \{a_1\}, Y_2':= Y_2\sm \{a_1,a_2,\ldots,a_k\}, Y_5^*:= Y_5\sm \{b_1\}$ and $Y_5'=Y_5\sm \{b_1,b_2,\ldots,b_k\}$.

\smallskip
Since $G$ is $F_1$-free, $T=\es$, and by (R\ref{F1freeXemp}), $X=\es$. Then we have the following:

\begin{claim}\label{Y2Y5}For each $i\in \{1,2,\ldots,k\}$, the following hold:
\begin{enumerate}[(i)]
 \item Any vertex in $Y_2\sm \{a_i\}$ is adjacent to $a_i$ and non-adjacent to $b_i$ (and similarly, any vertex in $Y_5\sm \{b_i\}$ is adjacent to $b_i$ and non-adjacent to $a_i$). In particular, $\{a_1,\ldots,a_k\}$ and $\{b_1,\ldots,b_k\}$ are cliques.
      \item $[Y_3, \{b_i\}]$ is complete and $[Y_4, \{a_i\}]$ is complete.
\item $[Y_3, Y_5\sm \{b_i\}]=\es$ and $[Y_4, Y_2\sm \{a_i\}]=\es$.
 \item Any vertex $a\in A$ is either complete to $\{a_i, b_i\}$ or anti-complete to $\{a_i, b_i\}$. Moreover, if $a\in A$ and if there exists an index $i$ such that $a$ is anti-complete to $\{a_i, b_i\}$, then $a$ is complete to $(Y_2\sm \{a_i\}) \cup (Y_5\sm \{b_i\})$.
 \end{enumerate}
 \end{claim}
 \begin{proof2} $(i)$: Let $x\in Y_2\sm \{a_i\}$ be arbitrary. If $xa_i\notin E$, then since $\{x,v_1,a_i,v_3,b_i\}$ does not induce a paraglider, we have $xb_i\notin E$, and then $x$-$v_2$-$a_i$-$b_i$-$v_5$ is a $P_5$. So, $xa_i\in E$. Moreover, since $\{x,v_3,v_4,b_i,a_i\}$ does not induces a paraglider, we have $xb_i\notin E$. Thus any vertex in $Y_2\sm \{a_i\}$ is adjacent to $a_i$ and non-adjacent to $b_i$. Likewise, any vertex in $Y_5\sm \{b_i\}$ is adjacent to $b_i$ and non-adjacent to $a_i$. So (i) holds. \\
 $(ii)$: If there is a vertex $x\in Y_3$ such that $xb_i\notin E$, then by (R\ref{Yi}:b), $xa_i\in E$, and then $\{a_i,v_3,v_4,b_i,x\}$ induces a paraglider which is a contradiction. So $[Y_3, \{b_i\}]$ is complete. Likewise, $[Y_4, \{a_i\}]$ is complete. Thus (ii) holds.\\
 $(iii)$: Suppose to the contrary that there are adjacent vertices $x\in Y_3$ and $y\in Y_5\sm \{b_i\}$.
  By item~$(i)$, we have $yb_i\in E$, and by item~$(ii)$, $xb_i\in E$. Now, $\{x,y,v_1,v_2,b_i\}$ induces a paraglider which is a contradiction. So (iii) holds. \\
 $(iv)$: This follows by item~$(i)$, (R\ref{AYi}:a) and  (R\ref{AYi}:b).
 \end{proof2}

Next we have the following:
 \begin{claim}\label{Y1Y3Y4}
 $Y_1$ is a clique, $|Y_3|\le 1$, $|Y_4|\le 1$, and $[Y_1, Y_3\cup Y_4]$ is complete.
 \end{claim}

\begin{proof2} First, since $y_2\in Y_2$ and $y_5\in Y_5$, by (R\ref{Yi}:d), $Y_1$ is a clique.
 Next, we know by Claim~\ref{Y2Y5}(ii) that $[Y_3,\{b_1\}]$ and $[Y_4, \{a_1\}]$ are  complete. So by (R\ref{Yi}:e), we have $|Y_3| \leq 1$ and  $|Y_4|\leq 1$. Finally, suppose to the contrary that there are non-adjacent vertices $y_1\in Y_1$ and $y_3\in Y_3$. We know by (R\ref{Yi}:b) that $y_1b_1\in E$, and by Claim~\ref{Y2Y5}(ii) that $y_3b_1\in E$. Now $\{b_1,v_1,v_2,y_3,y_1\}$ induces a paraglider, a contradiction. So $[Y_1, Y_3]$ is complete. Likewise, $[Y_1, Y_4]$ is  complete. \end{proof2}

 \smallskip
 For each $i\in \{1,2,\ldots, k\}$, let $A_i':= \{x\in A\mid x \mbox{~is~anti-complete~to~}\{a_i,b_i\}$ $ \mbox{and~is~complete~to~}
 (Y_2\sm \{a_i\} \cup (Y_5\sm \{b_i\})\}$. Let $A':=A_1'\cup \cdots \cup A_k'$. Let $A'':= \{x\in A\mid x \mbox{~is~complete~to}$ $\{a_1,\dots, a_k, b_1,\dots,b_k\}\}$.
Then by Claim~\ref{Y2Y5}(iv), $A= A'\cup A''$.
Moreover, we have the following claim.

\begin{claim}\label{AY134} The following hold:
(i) For each $i\in \{1,2,\ldots, k\}$, $|A_i'|\le 1$. (ii)~$[A',$ $Y_1\cup Y_3\cup Y_4] =\es$. (iii) $[A'', Y_1\cup Y_3\cup Y_4]$ is complete.
\end{claim}
\begin{proof2} $(i)$:~Suppose to the contrary that $|A_i'|\geq 2$ and let $x, x'\in A_i'$. Since $A_i' \subseteq A$ and $A$ is a clique (by (R\ref{AneiA}:a), $xx'\in E$. But, then $\{x,v_1,a_i,v_3,x'\}$ induces a paraglider. So (i) holds.  \\
$(ii)$: Suppose not. Then there are adjacent vertices $q\in A'$ and $y\in Y_1\cup Y_3\cup Y_4$.  Since $q\in A'$, there exists a pair $\{a_i,b_i\}$ such that $q$ is anti-complete to $\{a_i,b_i\}$. Now if $y\in Y_1$, then by (R\ref{Yi}:b), $yb_i\in E$, and then $\{v_1,q,v_4,b_i,y\}$ induces a paraglider which is a contradiction. So $y\in Y_3\cup Y_4$. Then  since  $[Y_3,\{b_i\}]$ and $[Y_4,\{a_i\}]$ are complete (by Claim~\ref{Y2Y5}(ii)), we have  a  contradiction to (R\ref{AYi}:a). So (ii) holds.\\
$(iii)$: Suppose not. Then there are non-adjacent vertices $q\in A''$ and $y\in Y_1\cup Y_3\cup Y_4$. Since $q\in A''$, $q$ is complete to $\{a_1,b_1\}$. Now if $y\in Y_1$, then by (R\ref{Yi}:b), $yb_1\in E$, and then $\{v_5,y,v_2,q,b_1\}$ induces a paraglider which is a contradiction. So $y\in Y_3\cup Y_4$. Then  since  $[Y_3,\{b_1\}]$ and $[Y_4,\{a_1\}]$ are complete (by Claim~\ref{Y2Y5}(ii)), we have  a  contradiction to (R\ref{AYi}:a). So (iii) holds.
 \end{proof2}

\begin{claim}\label{A2Y2Y5} Let $x\in A''$. Then either  $x$ is complete to $Y_2$ or  $x$ is complete to $Y_5$.
\end{claim}
\begin{proof2}Suppose not. Then  there exist vertices $p\in Y_2'$ and  $q\in Y_5'$ such that $xp,xq\notin E$. But, then $p$-$v_2$-$x$-$v_5$-$q$ is a $P_5$. \end{proof2}

\smallskip
Suppose that $A''\neq \es$, and let $x\in A''$. By Claim~\ref{A2Y2Y5} and up to symmetry, we may assume that $x$ is complete to $Y_5$. Then using (R\ref{AneiA}) and Claim~\ref{AY134} and since $x$ is not universal, we conclude that $x$ has a non-neighbor in $Y_2$. Moreover, by (R\ref{AYi}:d), $\overline{N(x)}\cap Y_2$ is a stable set. Now let us define $S:= \{x\} \cup (\overline{N(x)}\cap Y_2)$. Then since $[\{x\}, V(G)\sm (\overline{N(x)}\cap Y_2)]$ is complete, by (R\ref{Yi}:a), we see that $S$ is a good stable set of $G$. So, we may assume that $A''=\es$.

 \begin{claim}\label{Y2Y5-Z} The following hold:  (i) If $Y_2^*\cup Y_5^*\neq \es$, then $Z_1=\es$. (ii) If  $Y_2^*\neq \es$, then  $Y_3=\es$. (iii) If $Y_5^* \neq \es$, then $Y_4=\es$.\end{claim}
 \begin{proof2} To prove the claim, we show that if $Y_2^*\neq \es$, then $Z_1\cup Y_3=\es$, and the other cases follow by symmetry. Let $x\in Y_2^*$.
 Suppose to the contrary that $Z_1\cup Y_3\neq \es$, and let $y\in Z_1\cup Y_3$. We know by Claim~\ref{Y2Y5}(i) that $a_1x\in E$ and $b_1x\notin E$. Now if $y\in Z_1$, then  by (R\ref{ZiYj}:b), we have $ya_1,yb_1,yx\notin E$. But then $y$-$v_5$-$b_1$-$a_1$-$x$ is a $P_5$. So $y\in Y_3$. Then by (R\ref{Yi}:b), $xy\in E$, and by Claim~\ref{Y2Y5}(ii), $yb_1\in E$. But then $\{x,y,b_1,v_1,v_2\}$ induces a paraglider which is a contradiction. So, $Z_1\cup Y_3=\es$.
 \end{proof2}

\begin{claim}\label{Y1leq1}
 \mbox{If $Z_1\cup Z_2\cup Z_5 \cup A_1'\neq \es$, then $|Y_1|\leq 1$.}\end{claim}
\begin{proof2} Suppose not. Let $x,y\in Y_1$ and $z\in Z_1\cup Z_2\cup Z_5 \cup A_1'$. We know by Claim~\ref{Y1Y3Y4} that $Y_1$ is a clique and so $xy\in E$. Moreover, by (R\ref{Yi}:b), $\{x,y\}$ is complete to $\{a_1,b_1\}$.  If $z\in Z_1$, then $zx,xy\in E$ (by (R\ref{ZiYj}:a)), and then by (R\ref{ZiYj}:b), $\{x,z,v_3,a_1,y\}$ induces a paraglider.
 If $z\in Z_2\cup Z_5$, then $zx,xy\notin E$ (by (R\ref{ZiYj}:b)). Also if $z\in Z_2$, then $za_1\in E$ (by (R\ref{ZiYj}:a)), and if  $z\in Z_5$, then $zb_1\in E$ (by (R\ref{ZiYj}:a)). But then either $\{x,v_5,z,a_1,y\}$ or  $\{x,v_2,z,b_1,y\}$ induces a paraglider.
 If $z\in A_1'$, then by Claim~\ref{AY134}(ii), $zx,zy\notin E$. But then $\{x,v_2,z,v_5,y\}$ induces a paraglider.
\end{proof2}

Suppose that $Y_2^* \cup Y_5^*=\es$. Since $A''=\es$, $A=A_1'$.
If $Y_3\cup Y_4\cup Z_1 \cup  Z_2\cup Z_5 \cup A_1'\neq \es$ or if $|Y_1|\le 1$, then by Claims~\ref{Y1Y3Y4} and \ref{Y1leq1}, and (R\ref{Yi}:e) we conclude that  $Y$ is a clique with $|Y|\le 5$. So $|V(G)|=|V(C_5)|+ |Z|+|Y|+|A_1'|\le  16$, and we see that in this case, $G$ is an induced subgraph of the complement of the Clebsch graph.
   If $Y_3\cup Y_4\cup Z_1 \cup  Z_2\cup Z_5 \cup A_1'=\es$ and $|Y_1|\ge 2$, then since $Y_1$ is complete to $\{a_1,b_1\}$ (by (R\ref{Yi}:b)), we see that $\omega(G)\geq 5$, and hence $\{v_1\}$ is a good stable set of $G$.

So suppose that $Y_2^* \cup Y_5^*\neq \es$. We may assume, up to symmetry, that $Y_2^*\neq \es$. Let  $p\in Y_2^*$.  Then by Claim~\ref{Y2Y5-Z}, $Z_1\cup Y_3 =\es$. Further we have the following.
\begin{claim}\label{Y1-Z2Z5A'}
 \mbox{We have: Either $Y_1=\es$ or $Y_4\cup Z_2\cup Z_5\cup A'=\es$.}\end{claim}
 \begin{proof2}Suppose not. Let $x\in Y_1$ and $y\in Y_4\cup Z_2\cup Z_5\cup A'$. Then by (R\ref{Yi}:b), $xb_1, px\in E$.  Now:
  If $y\in Y_4$, then $yx\in E$ ( by Claim~\ref{Y1Y3Y4}), $py\notin E$ (by Claim~\ref{Y2Y5}(iii)). But then $\{x,y,v_3,p,v_2\}$ induces a paraglider.
 If $y\in Z_2$, then by (R\ref{ZiYj}), $yb_1, yp\in E$ and $yx\notin E$. But then $\{p,x,v_5,y,b_1\}$ induces a paraglider.
 If $y\in Z_5$, by (R\ref{ZiYj}), $yb_1, yp\in E$ and $yx\notin E$. But, then $\{p,x,b_1,y,v_2\}$ induces a paraglider.
So, we may assume that $y\in A'$. Then there exists a pair $\{a_i,b_i\}$ such that $y$ is anti-complete to $\{a_i,b_i\}$. By Claim~\ref{Y2Y5}(i), $pa_i\in E$ and $pb_i\notin E$. Then since $p$-$v_2$-$y$-$v_5$-$b_i$ is not a $P_5$, we have $py\in E$. Also, by Claim~\ref{AY134}, $xy\notin E$. But now $\{p,x,v_5,y,v_2\}$ induces a paraglider which is a contradiction. So the claim holds.
\end{proof2}

First suppose that $Y_1\neq \es$. Then by Claim~\ref{Y1-Z2Z5A'}, $Y_4\cup  Z_2\cup Z_5\cup A'=\es$.  But then $\{v_1\}$ is a good stable set of $G$. 
 So, we may assume that $Y_1=\es$.

Next suppose that $Y_4\neq \es$. Then by Claim~\ref{Y1Y3Y4}, we let $Y_4=\{y_4\}$. Also, by Claim~\ref{Y2Y5-Z}, $Y_5^*=\es$.  Moreover, we show that $Z_3=\es$. Suppose not, and let $z_3\in Z_3$. Then by (R\ref{ZiYj}:b), $z_3p,z_3y_4\notin E$. But then $y_4$-$v_4$-$z_3$-$v_1$-$p$ is a $P_5$. So, $Z_3=\es$. Now, in this case, we see that there is a good stable set of $G$ as follows:
 If $Z=\es$, then $\{y_4,v_2\}$ is a good stable set.  So $Z\neq \es$.
  If $Z_2\neq \es$, then by (R\ref{Zi}), (R\ref{ZiYj}:a), and (R\ref{AneiA}),  $[Z_2, V(G)\sm \{v_2\}]$ is complete, and hence  $\{z_2,v_2\}$ is a good stable set of $G$. So $Z_2=\es$.
 Next if $Z_4\neq \es$, then since $[Z_4, V(G)\sm \{v_4,b_1\}]$ is complete,  $\{z_4,v_4\}$ is a good stable set of $G$. So $Z_4=\es$.
  Finally, if $Z_5\neq \es$, then since $[Z_4, V(G)\sm \{b_1\}\cup Y_4]$ is complete, $\{z_5,v_5\}$ is a good stable set of $G$.

So, we may assume that $Y_4=\es$. If $Z_2\neq \es$,  then since   $[Z_2, V(G)\sm (\{v_2\}\cup Z_3)]$ is complete, we see that $\{z_2,v_2\}$ is a good stable set of $G$. So $Z_2=\es$. Likewise, $Z_5=\es$.
 Then we define $Q_1\cup R_1:= \{a_1,a_2,\ldots,a_k, v_2,v_3\}\cup Y_2'\cup Z_4$, $Q_2\cup R_2:= \{b_1,b_2,\ldots,b_k,v_4,v_5\}\cup Y_5'\cup Z_3$,  and $S:= \{v_1\}\cup A'$. Now, it is easy see that $G\in \cal{H}$.

 This completes the proof of the theorem.
\end{proof}

\begin{theorem}\label{thm:P5PF1F2-free-F3}
Let $G$ be a ($P_5,F_1,F_2$,paraglider)-free atom with no universal or pair of comparable vertices. Suppose that $G$ contains  $F_3$. Then $G$ has a stable set $S$  such that either $S$ is good or $G\setminus S$ is  perfect.
\end{theorem}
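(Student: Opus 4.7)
The plan is to mimic the strategy used for Theorems~\ref{thm:P5P-free-F1} and \ref{thm:P5PF1-free-F2}: fix a copy of $F_3$ in $G$, take its induced $C_5$ as the reference cycle $C=\{v_1,\ldots,v_5\}$, and apply Theorem~\ref{thm:C5} so that $V(G)\setminus C$ is partitioned as $T\cup X\cup Y\cup Z\cup A$. Since $G$ is $F_1$-free, (R\ref{F1freeXemp}) immediately gives $X=\emptyset$. Moreover $T=\emptyset$: any $t_i\in T_i$ together with $C$ already induces an $F_1$ (this is essentially how $F_1$ is defined). The $F_2$-freeness, applied to the matching behaviour of (R\ref{Yi}(c)), forces $[Y_i,Y_{i+2}]=\emptyset$ for every $i$, because an edge between $Y_i$ and $Y_{i+2}$ together with $C$ reproduces an $F_2$. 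Combined with (R\ref{Yi}), this reduces the $Y$-part to a structure in which $Y_i$ is complete multipartite, consecutive $Y_i$'s are mutually complete, and non-consecutive $Y_i$'s are mutually anticomplete.

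Next I would locate the extra vertices of the assumed $F_3$-copy in this partition. Depending on the precise form of $F_3$, some specific $Y_i$, $Z_j$, or $A$-set is forced to be non-empty, and the remaining properties (R\ref{Yi}), (R\ref{Zi}), (R\ref{YZX}), (R\ref{ZiYj}), (R\ref{AneiA}), (R\ref{AYi}) cascade into strong bounds such as $|Y_j|\le 1$ for several indices and $Z_k=\emptyset$ for others. In particular, once one $Y_i$ has a vertex, (R\ref{Yi}(d)) makes the two neighbouring $Y$-sets cliques, and (R\ref{Yi}(e)) tightens cardinalities; together with the newly-established $[Y_i,Y_{i+2}]=\emptyset$, this leaves only a short list of possible configurations to analyse.

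Then I would try, in order, the following candidate stable sets $S$ and show that one of them works: (i) a single $C_5$-vertex $\{v_i\}$, which is good whenever $v_i$ meets every maximum clique; (ii) a pair such as $\{v_i,z_j\}$ or $\{v_i,q\}$ with $q\in A$ chosen so that $q$ is adjacent to almost everything else, mirroring the argument used to extract good stable sets in Theorem~\ref{thm:P5PF1-free-F2}; (iii) the set $\{x\}\cup(\overline{N(x)}\cap Y_j)$ for a suitable $x\in A$, which is stable by (R\ref{AYi}(d)) and meets every $\omega$-clique by (R\ref{Yi}(a)). If every candidate for a \emph{good} stable set fails, the tight structural restrictions should leave $G\setminus S$ with only two non-trivial layers of vertices, so that $G\setminus S$ is bipartite or an expansion of a perfect base graph (a bull, $\overline{C_6}$, or similar), hence perfect by the result of Lov\'asz.

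The main obstacle will be the combinatorial case analysis: $F_3$'s extra vertices can be placed in several distinct sets among the $Y_i$, $Z_j$, $A$, and each placement must be chased through the rules of Theorem~\ref{thm:C5} to rule out or exhibit the relevant $S$. The delicate point is when multiple $Y_i$'s are simultaneously non-empty --- then the chosen $S$ must simultaneously hit all maximum cliques formed by the $Y$-clique structure and by the $C_5$-vertices, and the argument that either a singleton candidate works or the residue is perfect depends on the exact adjacency imposed by $F_3$; this is where the $F_3$-hypothesis is used most crucially.
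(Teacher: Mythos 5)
Your opening moves coincide with the paper's: root the analysis at the $C_5$ inside the $F_3$-copy, apply Theorem~\ref{thm:C5}, use $F_1$-freeness to get $T\cup X=\es$, and use $F_2$-freeness to show $[Y_i,Y_{i+2}]=\es$ for all $i$ (this is exactly the paper's Claim~\ref{F3-YiYi+2}); moreover your candidate (iii), the set $\{x\}\cup(\overline{N(x)}\cap Y_j)$ for $x\in A$, is precisely the good stable set the paper produces when $A\neq\es$. But past that point the proposal is a plan rather than a proof, and the missing ingredients are concrete. First, $F_3$ is $C_5$ plus a vertex $y_1\in Y_1$ plus a vertex $z_1\in Z_1$, so the hypothesis that $G$ contains $F_3$ is used solely to force $Y_1\neq\es$ \emph{and} $Z_1\neq\es$ simultaneously; your outline leaves this placement undetermined. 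Second --- and this is the real gap --- the case analysis is collapsed not by cardinality bounds but by a second $F_2$-argument that you never formulate: for each $i$, either $Z_i=\es$ or $Y_{i-1}\cup Y_{i+1}=\es$ (the paper's Claim~\ref{F3-ZY}; a vertex of $Z_i$ and a vertex of $Y_{i+1}$ are non-adjacent by (R\ref{ZiYj}:b), and then $C$ together with these two vertices induces an $F_2$). Applied to $y_1$ and $z_1$, this empties $Y_2,Y_5,Z_2,Z_5$ at a stroke. Your predicted cascade into bounds like $|Y_j|\le 1$ does not occur and is not needed: once $[Y_i,Y_{i+2}]=\es$, property (R\ref{Yi}:e) is vacuous, and the surviving sets $Y_1,Y_3,Y_4$ may be arbitrarily large complete multipartite graphs. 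None of the properties of Theorem~\ref{thm:C5} alone can empty any $Z_k$; only the $F_2$-argument can, so without Claim~\ref{F3-ZY} the ``short list of configurations'' you appeal to cannot be derived.

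With Claim~\ref{F3-ZY} in hand, the paper finishes with a dichotomy that your candidate list only partially anticipates. If $A=\es$, the stable set is a pair of cycle vertices, $\{v_2,v_4\}$ or $\{v_2,v_5\}$ according to which of $Z_3,Z_4$ are non-empty, and $G\sm S$ is a $\overline{P_3}$-free expansion of a perfect graph, hence perfect; this matches your fallback in spirit, but you identify neither the pairs nor the reason for perfection. If $A\neq\es$, one needs the observation that a vertex $x\in A$ with a non-neighbour $y\in Y_1$ and a non-neighbour $y'\in Y_3\cup Y_4$ yields the $P_5$ $y$-$v_1$-$x$-$v_3$-$y'$ (or $y$-$v_1$-$x$-$v_4$-$y'$), using $[Y_1,Y_3\cup Y_4]=\es$ from Claim~\ref{F3-YiYi+2}; hence such an $x$ is complete to $V(G)\sm Y_1$ by (R\ref{AneiA}), and only then is $\{x\}\cup(\overline{N(x)}\cap Y_1)$ (or its analogue with $Y_3$ or $Y_4$ when $[A,Y_1]$ is complete, using that $G$ has no universal vertex) a \emph{good} stable set. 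This near-universality argument is the second idea absent from your proposal. So: right skeleton and the right candidate stable sets, but two genuinely missing steps, and as written the attempt does not constitute a proof.
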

\begin{proof} Let $G$ be the given graph with vertex-set $V$ and edge-set $E$.
Consider the graph $F_3$ as shown in Figure~\ref{fig:GF123} and let
$C=\{v_1,\ldots, v_5\}$.  We use the same notation as in
Theorem~\ref{thm:C5} and use the properties in Theorem~\ref{thm:C5}.  So $y_1 \in Y_1$ and $z_1\in Z_1$. Since $G$ is $F_1$-free, $T\cup X=\es$ (by (R\ref{F1freeXemp})). Moreover, we have the following:

\begin{claim}\label{F3-YiYi+2} For each $i$, we have  $[Y_{i}, Y_{i+2}]=\es$.
\end{claim}
\begin{proof2} Suppose to the contrary that there are adjacent vertices, say $x\in Y_i$ and $y\in Y_{i+2}$. Then $\{v_1,v_2,v_3,v_4,v_5,x,y\}$ induces an $F_2$ which is a contradiction. So the claim holds. \end{proof2}

\begin{claim}\label{F3-ZY} For each $i$, we have either $Z_i =\es$ or $Y_{i-1}\cup Y_{i+1}=\es$.
\end{claim}
\begin{proof2} Suppose not. Up to symmetry, let $z\in Z_i$ and $y\in Y_{i+1}$. Then by (R\ref{ZiYj}), $zy\notin E$. But, then $\{v_1,v_2,v_3,v_4,v_5,z,y\}$ induces an $F_2$. \end{proof2}

\medskip

Since $z_1\in Z_1$ and $y_1\in Y_1$, by Claim~\ref{F3-ZY}, the sets $Y_2, Y_5, Z_2$ and $Z_5$ are empty. If $A=\es$, then  up to symmetry, we  have three cases: (a) $Z_3\neq \es$ and $Z_4 \neq \es$. (b) $Z_3\neq \es$ and $Z_4=\es$. (c) $Z_3 =\es$ and $Z_4 = \es$.
In Case (a), $Y_3 =Y_4 = \es$, and hence $G- \{v_2, v_4\}$ is a $\overline{P_3}$-free expansion of a perfect graph, and hence perfect. In Case~(b) and in Case~(c), $G-\{v_2, v_5\}$ is a $\overline{P_3}$-free  expansion of a $P_4$, and hence perfect.

So suppose that $A\neq \es$. First let us assume that $[A, Y_1]$ is not complete. Then there exists a vertex $x \in A$ that has a non-neighbor in $Y_1$, say $y\in Y_1$. If $x$ has a non-neighbor $y'\in Y_3\cup Y_4$, then $y$-$v_1$-$x$-$v_3$-$y'$ or $y$-$v_1$-$x$-$v_4$-$y'$ is a $P_5$. So,  by (R\ref{AneiA}), $[\{x\}, V(G)\sm Y_1]$ is complete. Then by (R\ref{AYi}:d), $\{x\}\cup (\overline{N(x)} \cap Y_1)$ is a good stable set of $G$. So we may assume that $[A, Y_1]$ is   complete. Then since $G$ has no universal vertex, by (R\ref{AneiA}), $x$ has a non-neighbor in $Y_3\cup Y_4$.  Then $\{x\}\cup (\overline{N(x)} \cap Y_3)$ or $\{x\}\cup (\overline{N(x)}\cap Y_4)$ is a good stable set of $G$. This complete the proof of the theorem.
\end{proof}

\begin{theorem}\label{thm:P5PF123-free-C5}
Let $G$ be a ($P_5,F_1,F_2, F_3$,paraglider)-free atom with no universal or pair of comparable vertices. Suppose that $G$ contains a $C_5$. Then one of the following hold:
\begin{itemize}\itemsep=0pt
\item $G$ is an induced subgraph of the complement of the Petersen graph.
\item $G$ is an $\overline{P_3}$-free expansion of $C_5$.
\item   $G$ has a stable set $S$ such that $G\sm S$ is perfect.
\end{itemize}

\end{theorem}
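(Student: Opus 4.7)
My plan is to apply Theorem~\ref{thm:C5} to the given induced $C_5$, $C=\{v_1,\dots,v_5\}$, and then use $F_1,F_2,F_3$-freeness in turn to kill the auxiliary sets. From $F_1$-freeness, (R\ref{F1freeXemp}) gives $T=X=\emptyset$. From $F_2$-freeness, the arguments in Claims~\ref{F3-YiYi+2} and~\ref{F3-ZY} (which really only use $F_2$-freeness) give $[Y_i,Y_{i+2}]=\emptyset$ for all $i$ and $Z_i\neq\emptyset \Rightarrow Y_{i-1}\cup Y_{i+1}=\emptyset$. From $F_3$-freeness (since $F_3$ is a $C_5$ plus a $Y_1$-vertex and a $Z_1$-vertex), $Z_i\neq\emptyset \Rightarrow Y_i=\emptyset$. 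Combining, whenever $Z_i\neq\emptyset$ all of $Y_{i-1}\cup Y_i\cup Y_{i+1}$ is empty, and $V(G)=C\cup A\cup Y\cup Z$ with tight mutual-exclusion constraints.

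I would then split on $A$. Suppose $A=\emptyset$. If $Z=\emptyset$, set $Q_i:=\{v_i\}\cup Y_i$; the adjacency pattern provided by (R\ref{Yi}:a,b) together with $[Y_i,Y_{i+2}]=\emptyset$ makes $G$ a $\overline{P_3}$-free expansion of $C_5$. If $Y=\emptyset$, then $V(G)=C\cup Z$ with $|Z_i|\le 1$, and the adjacencies in (R\ref{Zi}) and (R\ref{ZiYj}) realise $G$ as an induced subgraph of the complement of the Petersen graph (the Petersen complement decomposes as a $C_5$ plus five extra vertices $\{z_1,\dots,z_5\}$ forming a second $C_5$ via the distance-$2$ pattern of (R\ref{Zi}:c)). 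In the remaining sub-case both $Y$ and $Z$ are non-empty; the constraint forces at most two (then necessarily consecutive) $Z_i$'s to be non-empty, leaving only a narrow range of $Y_j$'s possibly non-empty. In each such configuration I would choose a stable set $S\subseteq C$ of size $1$ or $2$ (for instance $S=\{v_j,v_{j+2}\}$ with indices chosen to hit the two ``free'' positions in $C$) whose removal kills the $C_5$ and leaves $G\sm S$ as a join of a small clique with $\overline{P_3}$-free pieces (plus possibly an isolated pendant $v$), which is perfect by Lov\'asz's theorem on expansions of perfect graphs.

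Suppose $A\neq\emptyset$, and pick $q\in A$. By (R\ref{AneiA}:b), $\overline{N(q)}\subseteq Y$. Combining (R\ref{AYi}:a) with completeness of $[Y_i,Y_{i+1}]$ shows that a non-neighbour of $q$ in $Y_i$ forces $q$ to be anti-complete to all of $Y_{i-1}\cup Y_{i+1}$; combining (R\ref{AYi}:b) with $[Y_i,Y_{i+2}]=\emptyset$ forces $Y_{i\pm 2}\subseteq N(q)$. Iterating these implications, the set of indices $i$ with $\overline{N(q)}\cap Y_i\neq\emptyset$ is highly constrained. In each resulting configuration I would construct $S$ as the union of a stable subset of $\overline{N(q)}\cap Y$ (stability follows from (R\ref{AYi}:d) and the structural restrictions just derived) together with a single vertex $v_j\in C$ chosen so that $v_j$ is anti-complete to the chosen $Y$-vertices. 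In $G\sm S$, $q$ becomes universal, so $G\sm S$ decomposes as the join of $\{q\}$ with $G\sm(S\cup\{q\})$, and the latter is a $\overline{P_3}$-free expansion of a $P_4$ (the remnant of $C$) with the remaining $Y$- and $Z$-pieces attached in a manner compatible with the expansion structure, hence perfect by Lov\'asz.

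The main obstacle I anticipate is in the configuration analysis: both in the ``mixed $Y,Z$'' sub-case when $A=\emptyset$, and in the $A\neq\emptyset$ case when $q$ has non-neighbours in two adjacent sets $Y_i,Y_{i+1}$ (which the implications above do allow, though it forces $Y_{i\pm 2}=\emptyset$). In such configurations the choice of $S$ is ad hoc, and verifying perfection of $G\sm S$ requires explicitly exhibiting it as a $\overline{P_3}$-free expansion of a small perfect graph (typically a $P_4$) with appropriate pendants; this is bookkeeping but tedious, and it is where I expect the bulk of the technical work to lie.
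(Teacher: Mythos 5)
Your first half tracks the paper's proof exactly: $T=X=\emptyset$ by (R\ref{F1freeXemp}), $[Y_i,Y_{i+2}]=\emptyset$ from $F_2$-freeness, and $Z_i\neq\emptyset\Rightarrow Y_{i-1}\cup Y_i\cup Y_{i+1}=\emptyset$ from $F_2$- and $F_3$-freeness; your $A=\emptyset$ branch (expansion of $C_5$ when $Z=\emptyset$, Petersen complement when $Y=\emptyset$, and otherwise deleting one or two vertices of $C$ to leave a $\overline{P_3}$-free expansion of a perfect graph) is also the paper's argument. The genuine gap is your $A\neq\emptyset$ branch. You used $F_3$-freeness only to get ``$Z_i\neq\emptyset\Rightarrow Y_i=\emptyset$,'' but it gives more: $[A,Y]$ must be complete. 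Indeed, suppose $q\in A$ has a non-neighbour $y\in Y_i$. Then $C'=\{y,v_{i+1},v_{i+2},v_{i-2},v_{i-1}\}$ induces a $C_5$; with respect to $C'$, the vertex $v_i$ is a $Y$-type vertex (its neighbours $v_{i-1},y,v_{i+1}$ are three consecutive vertices of $C'$, with centre $y$), the vertex $q$ is a $Z$-type vertex (adjacent to all of $C'$ except $y$), and $v_iq\in E$; so $C'\cup\{v_i,q\}$ induces $F_3$, a contradiction. Once $[A,Y]$ is complete, (R\ref{AneiA}:b) makes every vertex of $A$ universal, contrary to hypothesis, so $A=\emptyset$ outright.

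Consequently, the case you expected to contain ``the bulk of the technical work'' is vacuous, and conversely, as written your proposal is not a proof: that entire branch is left as a sketch, with an admittedly ad hoc choice of $S$ and no verification that $G\setminus S$ is perfect in the configurations you allow (e.g.\ $q$ with non-neighbours in two adjacent sets $Y_i, Y_{i+1}$). The methodological point worth internalizing is that when a forbidden graph such as $F_3$ is built from a $C_5$ plus attachments, one should test non-edges as well as edges against it after \emph{re-choosing} the $C_5$ inside the seven relevant vertices; that single check is what collapses the hard half of your case analysis.
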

\begin{proof} Let $G$ be the given graph with vertex-set $V$ and edge-set $E$.
 Suppose that $G$ contains  $C_5$ with vertex set $C=\{v_1,\ldots, v_5\}$.  We use the same notation as in
Theorem~\ref{thm:C5} and use the properties in Theorem~\ref{thm:C5}. Since $G$ is $F_1$-free, $T\cup X =\es$ (by (R\ref{F1freeXemp})).
Since $G$ is $F_2$-free, we have, for each $i$, $[Y_i, Y_{i+2}]=\es$. Since $G$ is $F_3$-free, $[A, Y]$ is complete. So by (R\ref{AneiA}), any vertex in $A$ is a universal vertex of $G$  and hence $A=\es$.  Moreover, if $z_i\in Z$, then  since $G$ is ($F_2,F_3$)-free, by (R\ref{ZiYj}), we have $Y_j=\es$, for $j\in\{i-1,i, i+1\}$.

If $Z=\es$, then $G$ is a $\overline{P_3}$-free expansion of $C_5$ (by (R\ref{Yi})). So let us assume that $Z\neq \es$.
If there exists an  $i$ such that $z_i, z_{i+2}\in Z$, then $Y=\es$. Now  by (R\ref{Zi}), $G$ is an induced subgraph of the complement of the Petersen graph. Finally up to symmetry, let us assume that   $Z= \{z_1\}$ or $Z=\{z_1,z_{2}\}$. (a) If $Z=\{z_1\}$, then $Y_j=\es$, for $j\in\{1,2,5\}$, and by (R\ref{ZiYj}), $[Y_3\cup Y_4, Z]$ is complete. So, by (R\ref{Yi}:a), $G\sm \{v_1\}$ is  a $\overline{P_3}$-free expansion of a perfect graph, and hence perfect. (b) If $Z=\{z_1,z_2\}$, then $Y_j=\es$, for $j\in\{1,2,3, 5\}$, and by (R\ref{ZiYj}), $[Y_4, Z]$ is complete. Then we see that by (R\ref{Yi}:a), $G\sm \{v_1,v_3\}$ is  a $\overline{P_3}$-free expansion of a perfect graph, and hence perfect. This completes the proof of the theorem.
\end{proof}

\section{Coloring ($P_5$,\,paraglider)-free graphs}
Given a graph $G$ and a proper homogeneous set $X$ in $G$, let $G/X$
be the graph obtained by replacing $X$ with a clique $Q$ of size
$\omega(X)$ (i.e., $G/X$ is obtained from $G\setminus X$ and $Q$ by
adding all edges between $Q$ and the vertices of $V(G)\setminus X$
that are adjacent to $X$ in $G$).
 The proof of the following lemma is very similar to that of Lemma~3.1  of \cite{KM2018} and we omit the details.
\begin{lemma} [\cite{KM2018}]\label{lem:red}
In a graph $G$ let $X$ be a proper homogeneous set such that $G[X]$ is
perfect.  Then $\omega(G)=\omega(G/X)$ and  $\chi(G)=\chi(G/X)$.  \hfill{$\Box$}  
\end{lemma}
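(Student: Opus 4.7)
\smallskip\noindent
\textbf{Proof proposal.} The plan is to prove the two equalities $\omega(G)=\omega(G/X)$ and $\chi(G)=\chi(G/X)$ separately, in each case by two opposite inequalities, translating cliques and colorings between $G$ and $G/X$ using the two defining features of the construction: (i) since $X$ is homogeneous, every vertex $v\in V(G)\setminus X$ is either complete or anticomplete to $X$, so $v$ is adjacent in $G/X$ to the clique $Q$ (that replaces $X$) if and only if $v$ is complete to $X$ in $G$; and (ii) since $G[X]$ is perfect, we have $\chi(G[X])=\omega(G[X])=\omega(X)=|Q|$.

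For the clique equality, I would argue as follows. Let $K$ be a maximum clique of $G$. If $K\cap X=\emptyset$, then $K$ is also a clique in $G/X$. Otherwise, $K\setminus X$ is complete to $K\cap X\subseteq X$, so by homogeneity $K\setminus X$ is complete to $X$, hence complete to $Q$ in $G/X$; therefore $(K\setminus X)\cup Q$ is a clique in $G/X$ of size $|K\setminus X|+\omega(X)\ge |K|$. Conversely, starting from a maximum clique $K'$ of $G/X$, if $K'\cap Q=\emptyset$ it is already a clique in $G$; otherwise $K'\setminus Q$ is complete to $X$ in $G$, and replacing $K'\cap Q$ by any $\omega(X)$-clique of $G[X]$ yields a clique in $G$ of size $\ge |K'|$.

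For the chromatic equality, the direction $\chi(G)\le\chi(G/X)$ is the easier one. Take a $\chi(G/X)$-coloring $c'$; the clique $Q$ uses exactly $\omega(X)$ distinct colors, say $\{\alpha_1,\ldots,\alpha_{\omega(X)}\}$, and every vertex of $V(G)\setminus X$ that is complete to $X$ is adjacent to all of $Q$, so avoids these $\omega(X)$ colors. Using perfection of $G[X]$, pick an $\omega(X)$-coloring of $G[X]$ with palette $\{\alpha_1,\ldots,\alpha_{\omega(X)}\}$; combined with the restriction of $c'$ to $V(G)\setminus X$, this yields a valid coloring of $G$ using only the palette of $c'$. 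For the reverse direction $\chi(G/X)\le\chi(G)$, take a $\chi(G)$-coloring $c$ and let $K_0\subseteq X$ be an $\omega(X)$-clique of $G[X]$; then $c(K_0)$ is a set of $\omega(X)$ colors, and since every vertex of $V(G)\setminus X$ complete to $X$ is complete to $K_0$, such vertices avoid all colors in $c(K_0)$. Color $Q$ with the colors $c(K_0)$ (one color per vertex) and keep $c$ on $V(G)\setminus X$; this is a valid coloring of $G/X$ using at most $\chi(G)$ colors.

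There is no real obstacle here: the main thing to be careful about is keeping the color palette on $Q$ aligned with an $\omega(X)$-clique's colors in $G$ (for $\le$) and with an $\omega(X)$-coloring of $G[X]$ (for $\ge$), so that the homogeneity of $X$ transports the no-conflict condition between the two graphs. Perfection of $G[X]$ is used in exactly one place, namely to guarantee that $G[X]$ admits an $\omega(X)$-coloring matching the palette chosen on $Q$. This is the analogue of Lemma~3.1 of \cite{KM2018}, and the argument is identical in structure; hence we omit the routine verification.
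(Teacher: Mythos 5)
Your proposal is correct, and it is exactly the standard argument that the paper points to: the paper itself omits the proof, citing Lemma~3.1 of \cite{KM2018}, and your translation of cliques and colorings between $G$ and $G/X$ via homogeneity, with perfection of $G[X]$ used precisely to obtain an $\omega(X)$-coloring of $X$ on the palette of $Q$, is that same routine verification. No gaps; nothing further needed.
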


 Let ${\cal C}_5$ be the class of
graphs that are $\overline{P_3}$-free expansions of $C_5$, and let ${\cal C}_5^*$
be the class of graphs that are clique expansions of $C_5$.

Let $\cal{H}^*$ be the class of graphs $G \in \cal{H}$ such that, with the notation as in Section~1, the two sets $R_1$ and $R_2$ are cliques.

 Since  $\overline{P_3}$-free graphs are perfect, the following lemma (Lemma~\ref{lem:red2}) can be proved using Lemma~\ref{lem:red}, and the proof is very similar to that of Lemma~3.3  of \cite{KM2018}, so we omit the details.

\begin{lemma}[\cite{KM2018}] \label{lem:red2}
For every graph $G$ in ${\cal C}_5$ (resp. $G$ in $\cal{H}$)
 there is a graph $G^*$ in ${\cal C}_5^*$
 (resp. $G^*$ in $\cal{H}^*$) such that
$\omega(G)=\omega(G^*)$ and $\chi(G)=\chi(G^*)$.
\hfill{$\Box$}
\end{lemma}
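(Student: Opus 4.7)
The plan is to apply Lemma~\ref{lem:red} iteratively, treating each non-clique ``block'' in the decomposition of $G$ as a proper homogeneous set whose induced subgraph is perfect, and to replace it by a clique of the correct size. The resulting graph lies in the target subclass and, by Lemma~\ref{lem:red}, has the same clique number and chromatic number.

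First I would handle the $\mathcal{C}_5$ case. Let $G\in\mathcal{C}_5$ with the expansion partition $V(G)=\bigcup_{v\in V(C_5)}Q_v$. By the definition of an expansion, for each $v$ the set $Q_v$ is a homogeneous set of $G$: every vertex outside $Q_v$ lies in some $Q_u$ with $u\neq v$, and is complete to $Q_v$ if $uv\in E(C_5)$ and anticomplete to $Q_v$ otherwise. By hypothesis $G[Q_v]$ is $\overline{P_3}$-free, hence a complete multipartite graph, hence perfect. Whenever $|Q_v|\ge 2$, $Q_v$ is a proper homogeneous set, so Lemma~\ref{lem:red} lets us replace $Q_v$ by a clique $Q_v^*$ of size $\omega(G[Q_v])$ without changing $\omega$ or $\chi$. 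Performing this substitution for every $v\in V(C_5)$ yields a graph $G^*$ whose cross-part adjacencies are unchanged and whose parts are now cliques, so $G^*\in\mathcal{C}_5^*$.

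For the $\mathcal{H}$ case, let $G\in\mathcal{H}$ with partition $V(G)=Q_1\cup Q_2\cup R_1\cup R_2\cup S$ as in Section~2. The set $R_1$ is homogeneous: by the defining conditions, $[Q_1\cup S,R_1]$ is complete while $[Q_2\cup R_2,R_1]=\es$, so every vertex outside $R_1$ is either complete or anticomplete to $R_1$; the analogous statement holds for $R_2$. Each of $G[R_1]$ and $G[R_2]$ is perfect by hypothesis. Assuming $|R_1|\ge 2$ (otherwise $R_1$ is already a clique and nothing is to do), apply Lemma~\ref{lem:red} to $R_1$ to replace it by a clique of size $\omega(G[R_1])$, and then likewise to $R_2$. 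The resulting graph $G^*$ has $\omega(G^*)=\omega(G)$ and $\chi(G^*)=\chi(G)$.

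It remains to check that $G^*\in\mathcal{H}^*$. Since $R_1, R_2$ interact with the rest of $G$ only through being either complete or anticomplete to other parts, the substitutions leave $Q_1,Q_2,S$ and all cross-part edges untouched. In particular the perfect matching $\{a_ib_i\}$ between $Q_1$ and $Q_2$, the completeness of $[Q_1,R_1]$, $[Q_2,R_2]$ and $[S,R_1\cup R_2]$, the anticompleteness of $[Q_1\cup R_1,R_2]$ and $[Q_2\cup R_2,R_1]$, and the injective function $f\colon S\to\{1,\ldots,k\}$ witnessing the edges incident with $S$ all survive unchanged. Since now both $R_1$ and $R_2$ are cliques, $G^*\in\mathcal{H}^*$. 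The only nontrivial point is the initial observation that each $Q_v$ (respectively each $R_i$) is a proper homogeneous set with perfect induced subgraph, after which everything follows directly from Lemma~\ref{lem:red}; there is no real obstacle.
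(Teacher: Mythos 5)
Your proposal is correct and is essentially the proof the paper has in mind: the paper omits the details precisely because the lemma ``can be proved using Lemma~\ref{lem:red}'' (as in Lemma~3.3 of \cite{KM2018}), and your argument does exactly that, verifying that each part $Q_v$ (resp. $R_1$, $R_2$) is a proper homogeneous set inducing a perfect graph and applying Lemma~\ref{lem:red} iteratively. The checks you include (properness when the part has at least two vertices, $\overline{P_3}$-free graphs being complete multipartite hence perfect, and the preservation of the defining structure of $\mathcal{C}_5^*$ resp. $\mathcal{H}^*$ after substitution) are the right ones and complete.
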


\begin{lemma}[\cite{KM2018}]\label{lem:C5exp}
Let $G$ be a clique expansion of $C_5$. Then  $\chi(G)\le \lceil\frac{5\omega(G)}{4}\rceil$.  \hfill{$\Box$}
\end{lemma}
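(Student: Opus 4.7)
My plan is to translate the problem into multicoloring of $C_5$. Write $V(G)=Q_1\cup\cdots\cup Q_5$ as in the definition of a clique expansion of $C_5$, and set $q_i=|Q_i|$. Because $[Q_i,Q_{i+1}]$ is complete and $[Q_i,Q_{i+2}]=\es$, we have $\omega(G)=\max_i(q_i+q_{i+1})$; call this value $\omega$. A proper $N$-coloring of $G$ is exactly a family of color sets $C_1,\dots,C_5\subseteq\{1,\dots,N\}$ with $|C_i|=q_i$ and $C_i\cap C_{i+1}=\es$ for all $i$ (indices mod~$5$). Two lower bounds are immediate: $N\ge\omega$ trivially, and $N\ge\lceil\tfrac12\sum_i q_i\rceil$ because each color class is a stable set in $C_5$ and so uses at most two of the $Q_i$'s.

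Summing the inequalities $q_i+q_{i+1}\le\omega$ over $i=1,\dots,5$ gives $2\sum_i q_i\le 5\omega$, hence $\lceil\tfrac12\sum_i q_i\rceil\le\lceil 5\omega/4\rceil$; and $\omega\le\lceil 5\omega/4\rceil$ is trivial. It therefore suffices to show
\[
\chi(G)\;\le\;\max\!\Bigl(\omega,\;\Bigl\lceil \tfrac12 \textstyle\sum_i q_i\Bigr\rceil\Bigr),
\]
which is the sharp multichromatic number of $C_5$ with weights $q_i$. I would prove this by explicit construction. Set $N$ equal to the right-hand side and arrange the $N$ colors around a cycle. Place the arcs $C_1,C_3,C_5,C_2,C_4$ in this cyclic (``pentagram'') order on the color cycle, so that pairs consecutive in the placement are non-adjacent in $C_5$ and may overlap, while pairs adjacent in $C_5$ appear at cyclic distance two in the placement and so can be forced disjoint. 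A greedy sweep choosing each starting position as early as possible subject to the constraints already in place yields the desired coloring.

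The hard part is closure: when the sweep returns to $C_1$, the last arc $C_4$ must simultaneously avoid both $C_3$ and $C_5$ while fitting in what remains of the cycle. I plan to control this via a short LP on auxiliary variables $y_i\ge 0$ counting the colors shared by $Q_i$ and $Q_{i+2}$, subject to the five constraints $y_i+y_{i-2}\le q_i$. The unique ``balanced'' solution $y_i^{\star}=\tfrac12(q_i+q_{i+1}+q_{i+2}-q_{i+3}-q_{i+4})$ satisfies $\sum y_i^{\star}=\tfrac12\sum q_i$, and is nonnegative precisely when the weights are well-balanced; in this case the greedy placement closes up exactly. When some $y_i^{\star}<0$ the corresponding constraint $q_j+q_{j+1}<\omega$ is strict by a comfortable margin, and a short case split -- setting that coordinate to zero and solving the reduced system -- gives an integer placement losing at most one color, which is absorbed by the ceiling in $\lceil 5\omega/4\rceil$. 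That case split is essentially the only real work; everything else is cyclic bookkeeping.
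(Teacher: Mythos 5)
First, for context: the paper never proves this lemma --- it is imported verbatim from \cite{KM2018} with only a citation --- so there is no in-paper argument to match yours against, and I judge your proposal on its own. Your reduction is correct and is the right one: $\omega(G)=\max_i(q_i+q_{i+1})$, summing these five inequalities gives $\sum_i q_i\le\frac{5\omega}{2}$, hence $\lceil\frac12\sum_i q_i\rceil\le\lceil\frac{5\omega}{4}\rceil$, so it suffices to prove $\chi(G)\le\max\bigl(\omega,\lceil\frac12\sum_i q_i\rceil\bigr)$, which is indeed the true (and sharp) multichromatic number of the weighted $C_5$. The right vehicle is also the one you name: a proper coloring with $\sum_i q_i-\sum_i y_i$ colors exists if and only if the nonnegative integers $y_i$ (with your indexing, the number of colors shared by $Q_i$ and $Q_{i+2}$) satisfy $y_i+y_{i-2}\le q_i$ for all $i$; so everything comes down to exhibiting such integers with $\sum_i y_i\ge\min\bigl(\lfloor\frac12\sum_i q_i\rfloor,\;\sum_i q_i-\omega\bigr)$.

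However, the two steps you defer are exactly where the content lies, and as written both are gaps. (1) \emph{Parity.} Your balanced solution simplifies to $y_i^{\star}=\frac12\sum_j q_j-q_{i+3}-q_{i+4}$; it is nonnegative precisely when $\omega\le\frac12\sum_j q_j$ (that is what ``well-balanced'' means), but it is integral only when $\sum_j q_j$ is even. When $\sum_j q_j$ is odd the placement does \emph{not} ``close up exactly'': every $y_i^{\star}$ is a strict half-integer and you must round. The fix: round $y_1^{\star},y_2^{\star}$ up and $y_3^{\star},y_4^{\star},y_5^{\star}$ down; since the constraints couple $y_i$ only with $y_{i-2}$, no constraint contains two rounded-up variables, so feasibility survives (each constraint was tight and now sees one $+\frac12$ against one $-\frac12$, or two $-\frac12$'s), and $\sum_i y_i=\lfloor\frac12\sum_j q_j\rfloor$, exactly enough. (2) \emph{The unbalanced case} $\omega>\frac12\sum_j q_j$. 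Here ``setting that coordinate to zero and solving the reduced system, losing at most one color'' is asserted, not proved --- and it is also needlessly lossy. After rotating so that $\omega=q_1+q_2$, take $y_1=q_3$, $y_5=q_5$, $y_3=0$, $y_2=b$, $y_4=q_4-b$ with $b=\max(0,\,q_3+q_4-q_1)$: feasibility needs only $q_3\le q_1$, $q_5\le q_2$ and $q_3+q_4+q_5\le q_1+q_2$, all of which hold in this case, and $\sum_i y_i=\sum_j q_j-\omega$ exactly, i.e.\ $\chi(G)\le\omega$ with no loss. Finally, be aware that your ``absorbed by the ceiling'' escape hatch is only valid in this unbalanced case, where the target is $\omega$ and $\omega+1\le\lceil\frac{5\omega}{4}\rceil$; if a color were lost in the balanced case the lemma would genuinely fail (take all $q_i$ equal and even: then $\lceil\frac12\sum_j q_j\rceil=\lceil\frac{5\omega}{4}\rceil$ and there is zero slack). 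With (1) and (2) patched your argument is complete and would make the lemma self-contained rather than cited.
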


\enlargethispage{1cm}
For any fixed integer $k\ge 2$, let $G_k$ be the graph defined as follows.

\begin{itemize}\itemsep=1pt
\item $V(G_k)$ can be partitioned into three cliques $Q_1:=\{a_1,a_2\ldots, a_k\}$, $Q_2:=\{b_1,b_2,\ldots,b_k\}$,
and $S:=\{s_1,s_2,\ldots,s_k\}$ such that $[Q_1, Q_2]$ is a perfect matching, say  $\{a_1b_1,a_2b_2,\ldots,a_kb_k\}$.
\item  There exists an injective function $f:S\rightarrow \{1,2,\ldots, k\}$ such that for each vertex  $x\in S$, $\{x\}$ is anti-complete to $\{a_{f(x)}, b_{f(x)}\}$, and is complete to $(Q_1\cup Q_2) \sm \{a_{f(x)}, b_{f(x)}\}$.
\item No other edges in $G$.
\end{itemize}

\newpage
\begin{lemma}\label{lem:Gk}
For each integer $k\ge 2$, $\chi(G_k)\le \lceil \frac{3k}{2} \rceil$.
\end{lemma}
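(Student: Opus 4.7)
The plan is to exhibit an explicit proper coloring of $G_k$ using $\lceil 3k/2 \rceil$ color classes, each of which is a stable set of size at most $2$. Up to relabeling the vertices of $S$, we may assume $f(s_i)=i$, so that each $s_i$ has exactly two non-neighbors outside $S$, namely $a_i$ and $b_i$, while all of $Q_1,Q_2,S$ are cliques and the only non-edges between $Q_1$ and $Q_2$ are the pairs $a_lb_m$ with $l\ne m$.

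The first step is a preliminary observation, which simultaneously motivates the size-$2$ target and gives the matching lower bound (not required for the lemma, but explains the construction): every stable set of $G_k$ has size at most $2$. Indeed, a stable set contains at most one vertex from each of the three cliques $Q_1,Q_2,S$, and if it contains $s_i$ then its only other candidate members lie in $\{a_i,b_i\}$, but $a_ib_i$ is an edge. Hence $\alpha(G_k)=2$, and so $\chi(G_k)\ge\lceil|V(G_k)|/2\rceil=\lceil 3k/2\rceil$.

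For the upper bound, split $\{1,\ldots,k\}=I\cup J$ with $|I|=\lceil k/2\rceil$ and $|J|=\lfloor k/2\rfloor$, and fix any injection $\pi\colon J\to I$. Take the following color classes: $\{s_i,a_i\}$ for each $i\in I$; $\{s_j,b_j\}$ for each $j\in J$; $\{a_j,b_{\pi(j)}\}$ for each $j\in J$; and a singleton $\{b_i\}$ for each $i\in I\sm\pi(J)$. Each pair-class is stable: the $\{s_i,a_i\}$ and $\{s_j,b_j\}$ classes use the non-edges $s_ia_i$ and $s_jb_j$, while $\{a_j,b_{\pi(j)}\}$ is stable because $\pi(j)\in I$ and $j\in J$ are distinct, so $a_jb_{\pi(j)}$ is one of the non-edges $a_lb_m$ with $l\ne m$. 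These classes partition $V(G_k)$ by construction.

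The count is then $|I|+|J|+|J|+(|I|-|J|)=2|I|+|J|=2\lceil k/2\rceil+\lfloor k/2\rfloor=\lceil 3k/2\rceil$. There is no substantive obstacle: the construction is a direct greedy pairing, and the only point to watch is the parity bookkeeping, which is handled automatically by choosing $|I|=\lceil k/2\rceil$, so that when $k$ is even there are no singleton leftovers, and when $k$ is odd there is exactly one leftover $b_i$ contributing a single extra color.
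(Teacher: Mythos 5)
Your proposal is correct and takes essentially the same approach as the paper: both exhibit an explicit proper coloring whose classes are the non-edges $\{s_i,a_i\}$ or $\{s_j,b_j\}$ together with cross pairs $\{a_j,b_{\pi(j)}\}$ with $j\neq\pi(j)$; indeed, the paper's coloring for even $k=2t$ is exactly your construction with $I=\{t+1,\ldots,2t\}$, $J=\{1,\ldots,t\}$ and $\pi(j)=t+j$. The only cosmetic differences are that the paper handles odd $k$ by deleting $\{s_k,a_k,b_k\}$ and reducing to the even case while you treat both parities uniformly, and your observation that $\alpha(G_k)=2$ (hence the bound is tight) is extra, matching a remark the paper makes later.
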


\begin{proof}
Without loss of generality, we may assume that for each $i\in \{1,2\ldots,k\}$,
$s_i$ is anti-complete to $\{a_i,b_i\}$ and complete to $(Q_1\cup Q_2)\sm \{a_i,b_i\}$.
We consider two cases depending on whether $k$ is even or not.

Suppose first that $k=2t$ for some $t\ge 1$. Now we color $G_{2t}$ using $3t$ colors as follows:

$\bullet$ Color $s_1,s_2,\ldots,s_t$ with colors $1,2,\ldots,t$, respectively.

$\bullet$ Color $s_{t+1},s_{t+2},\ldots,s_{2t}$ with colors $t+1,t+2,\ldots,2t$, respectively.

$\bullet$ Color $a_1,a_2,\ldots,a_t$ with colors $2t+1,2t+2,\ldots,3t$, respectively.

$\bullet$ Color $a_{t+1},a_{t+2},\ldots,a_{2t}$ with colors $t+1,t+2,\ldots,2t$, respectively.

$\bullet$ Color $b_1,b_2,\ldots,b_t$ with colors $1,2,\ldots,t$, respectively.

$\bullet$ Color $b_{t+1},b_{t+2},\ldots,b_{2t}$ with colors $2t+1,2t+2,\ldots,3t$, respectively.

\noindent Then it can be easily checked that the above  is  a $3t$-coloring of $G_{2t}$.

Next suppose that $k=2t+1$, for some $t\ge 1$. Then $\lceil\frac{3k}{2}\rceil=3t+2$.
Observe that $G_{2t+1}\sm \{s_{2t+1},a_{2t+1},b_{2t+1}\}$ is isomorphic to $G_{2t}$.
Therefore, $\chi(G_{2t+1})\le 3t+2$.
\end{proof}

\begin{theorem}\label{thm:H}
If $G\in \mathcal{H}^*$, $\chi(G)\le \frac{3\omega(G)}{2}$.
\end{theorem}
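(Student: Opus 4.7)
Let $k := |Q_1| = |Q_2|$, $r := \max(|R_1|, |R_2|)$, and $s := |S|$. My plan is to construct a proper coloring of $G$ using at most $\lceil 3k/2\rceil + r$ colors and then verify that this quantity is at most $3\omega(G)/2$. For the lower bounds on $\omega(G)$: both $Q_1 \cup R_1$ and $Q_2 \cup R_2$ are cliques (since $[Q_i, R_i]$ is complete and each of $Q_i, R_i$ is itself a clique), giving $\omega(G)\ge k+r$. Moreover, for any $j \in \{1,\ldots,k\}\setminus f(S)$, the set $\{a_j, b_j\}\cup S$ is a clique of size $s+2$, so $\omega(G) \ge k+1$ whenever $r \ge 1$ or $s \ge k-1$.

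The coloring is built in two stages. First, $G[Q_1 \cup Q_2 \cup S]$ is (isomorphic to) an induced subgraph of $G_k$: for each $i \in \{1,\ldots,k\}\setminus f(S)$ add a dummy vertex $s_i$ with $f(s_i) = i$, which yields exactly $G_k$. Lemma~\ref{lem:Gk} thus supplies a proper coloring of $G[Q_1 \cup Q_2 \cup S]$ using at most $\lceil 3k/2\rceil$ colors. Second, I color $R_1 \cup R_2$ with $r$ additional fresh colors, reusing the same palette on $R_1$ and $R_2$; this is valid since $[R_1,R_2]=\es$ and each $R_i$ is a clique of size at most $r$, and it is automatically compatible with the previous stage. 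Together these give $\chi(G) \le \lceil 3k/2\rceil + r$.

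It remains to check $\lceil 3k/2\rceil + r \le 3\omega(G)/2$ by case analysis. If $k$ is even, the inequality is immediate. If $k$ is odd and $r \ge 1$, then $\omega(G)\ge k+r$ and a short calculation gives $(3k+1)/2 + r \le (3k+3r)/2$, precisely because $r \ge 1$. If $k$ is odd, $r=0$, and $s \ge k-1$, then $\omega(G)\ge k+1$ delivers $3\omega(G)/2 \ge (3k+3)/2 \ge (3k+1)/2$. The only remaining sub-case is $k$ odd, $r=0$, and $s \le k-2$, where $\omega(G) = k$ and we need $\chi(G) \le (3k-1)/2$, one color below what Lemma~\ref{lem:Gk} directly delivers. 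This is the main obstacle. To handle it I would revisit the explicit coloring from the proof of Lemma~\ref{lem:Gk}: writing $k = 2t+1$ and relabelling $\{1,\ldots,k\}$ so that index $2t$ lies outside $f(S)$ (possible since at least two indices are missing), the dummy $s_{2t}$ is the unique vertex with color $2t$ among the neighbors of $b_{2t+1}$ in Lemma~\ref{lem:Gk}'s coloring; with $s_{2t}$ absent from $G$, the color $2t$ becomes available for $b_{2t+1}$, so reassigning $c(b_{2t+1}) := 2t$ eliminates the need for color $3t+2$ altogether. The resulting coloring uses $3t+1 = (3k-1)/2 = \lfloor 3\omega(G)/2\rfloor$ colors, as desired.
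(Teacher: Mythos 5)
Your proof is correct, and its outer skeleton is the same as the paper's: bound $\chi(G)\le \chi(G[Q_1\cup Q_2\cup S])+\chi(G[R_1\cup R_2])\le \lceil 3k/2\rceil + r$ via Lemma~\ref{lem:Gk}, use $\omega(G)\ge k+r$, and split into cases. Where you genuinely diverge is the critical case ($k$ odd, $r=0$, $|S|\le k-2$, $\omega(G)=k$), where one color beyond Lemma~\ref{lem:Gk} must be saved. The paper stays modular: writing $t=|S|$, it splits $Q_1\cup Q_2\cup S$ into $G'\cong G_t$ (the $t$ matched pairs indexed by $f(S)$, together with $S$) and $G''$ (the remaining $k-t$ matched pairs, colorable with $k-t$ colors), getting $\chi(G)\le \lceil 3t/2\rceil+(k-t)\le k+\tfrac{t}{2}+\tfrac12$, and then wins numerically from $t\le k-2$. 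You instead open up the explicit coloring inside the proof of Lemma~\ref{lem:Gk}: after relabelling so that $2t\notin f(S)$, the dummy $s_{2t}$ is indeed the unique neighbor of $b_{2t+1}$ carrying color $2t$ (the only other vertex of that color is $a_{2t}$, a non-neighbor of $b_{2t+1}$), so recoloring $b_{2t+1}$ with $2t$ is proper in $G$ and color $3t+2$ can be discarded, giving $3t+1=(3k-1)/2$ colors. Your route is less modular --- it uses the particular coloring constructed in Lemma~\ref{lem:Gk}, not just its statement --- but it is more explicit about the structural reason a color is saved; the paper's route uses the lemma as a black box on the smaller graph $G_t$ and handles $\omega(G)\in\{k,k+1\}$ uniformly without distinguishing $|S|\ge k-1$ from $|S|\le k-2$.

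Two small patches you should make, neither affecting the structure of the argument. First, your justification that $\omega(G)\ge k+1$ when $s\ge k-1$ cites the clique $\{a_j,b_j\}\cup S$ with $j\notin f(S)$, which does not exist when $s=k$; in that case take $\{a_j,b_j\}\cup (S\setminus f^{-1}(j))$ for any $j$, which again has size $k+1$. Second, when you discard color $3t+2$ you should also say where $s_{2t+1}$ goes if it is a real vertex of $G$: assign it color $3t+1$, shared with its non-neighbor $a_{2t+1}$ (if it had been given $3t+2$, that color would still be in use). Finally, in your last sub-case $\omega(G)$ is a priori in $\{k,k+1\}$ rather than exactly $k$; but if $\omega(G)=k+1$ then $\lceil 3k/2\rceil\le 3\omega(G)/2$ already, so assuming $\omega(G)=k$ there is without loss of generality.
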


\begin{proof}
Let $G$ be partitioned into $Q_1$, $Q_2$, $R_1$, $R_2$ and $S$.
Without loss of generality, we may assume that for each $1\le i\le |S|$,
$s_i$ is complete to $(Q_1\cup Q_2)\setminus \{a_i,b_i\}$ and is anti-complete to $\{a_i,b_i\}$.
Let $r=\max\{|R_1|,|R_2|\}$.
Since $R_i\cup Q_i$ is a clique for $i\in \{1,2\}$,
$\omega(G)\ge k+r$.
Obviously,
\[
	\chi(G)\le \chi(G[Q_1\cup Q_2\cup S])+\chi(G[R_1\cup R_2]).
\]
Since $|S|\le k$, $G[Q_1\cup Q_2\cup S]$ is an induced subgraph of $G_k$.
By Lemma~\ref{lem:Gk}, $\chi(G[Q_1\cup Q_2\cup S])\le \lceil \frac{3k}{2} \rceil$.
On the other hand, since $R_1$ and $R_2$ are clique, $\chi(G[R_1\cup R_2])\le r$.
If $r\ge 1$, then $r+\frac{1}{2}\le \frac{3r}{2}$. Therefore,
$\chi(G)\le (\frac{3k}{2}+\frac{1}{2})+r\le \frac{3k}{2}+\frac{3r}{2}\le \frac{3\omega(G)}{2}$.
So we may assume that $r=0$.  Observe that $\omega(G)\in \{k,k+1\}$ and that if $\omega(G)= k$, then $|S|\le k-2$. If $S=\es$, then $\chi(G)\le k \le \omega(G)$.
So, let $S=\{s_1,s_2,\ldots, s_t\}$ for some $1\le t\le k$.
Let
\begin{equation*} \label{eq1}
\begin{split}
G' & = G[\{a_1,a_2,\ldots a_t\}\cup \{b_1,b_2,\ldots b_t\}\cup \{s_1,s_2,\ldots,s_t\}], \mbox{and} \\
G''  & = G[\{a_{t+1},\ldots, a_k\}\cup \{b_{t+1},\ldots, a_k\}]. \\
\end{split}
\end{equation*}

Observe that $\chi(G'')=k-t$. Since $G'$ is isomorphic to $G_{t}$, it follows from Lemma~\ref{lem:Gk} that
$\chi(G')\le \lceil \frac{3t}{2} \rceil\le \frac{3t}{2}+\frac{1}{2}$. Therefore,
 $\chi(G)\le \chi(G')+\chi(G'')\le (\frac{3t}{2}+\frac{1}{2})+(k-t)=k+\frac{t}{2}+\frac{1}{2}$. Now if $\omega(G)=k$, then since $t\leq k-2$, we have $\chi(G)\le k+\frac{k-2}{2}+\frac{1}{2}= \frac{3k}{2}-\frac{1}{2}<\frac{3}{2}\omega(G)$, and if $\omega(G)=k+1$, then $\chi(G)\le k+\frac{k}{2}+\frac{1}{2}< \frac{3}{2}\omega(G)$.
\end{proof}

Let $G$ be a graph and $v\in V(G)$. We say that $G'$ is obtained from $G$ by {\em adding a smaller vertex} $u$
if   $N(u)$ is a non-empty subset of $N(v)$ in $G'$. Let $\mathcal{B}$ be the set of graphs that
consists of the complement of the Clebsch graph and the graph obtained from the complement of the Clebsch graph by deleting a vertex.
We note that any graph $G\in \mathcal{B}$ has $\chi(G)=\lceil \frac{3\omega(G)}{2} \rceil$
and the ceiling is necessary. Moreover, it is not hard to verify that
these are the only induced subgraphs of the complement of the Clebsch graph that satisfy this property (This fact and Lemma \ref{lem:basegraphs} below are verified by a computer program
due to Owen Merkel).
Let $\mathcal{G}$ be the class of ($P_5$,\,paraglider)-free graphs that can be obtained from a graph in $\mathcal{B}$
by a sequence of adding a smaller vertex.  We say that a graph $G$ is {\em awesome} if for every non-empty clique $K$ of $G$, there exists
an induced $P_4:=$ $v$-$x$-$y$-$z$ such that $v\in K$ and $x,y,z\notin K$.
Then we have the following lemma and its proof is verified easily by a computer program.

\begin{lemma}\label{lem:basegraphs}
Every graph $G\in \mathcal{B}$ is awesome.
\end{lemma}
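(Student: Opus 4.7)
The plan is to exploit the vertex- and clique-transitivity of $H := \overline{\text{Clebsch}}$. Since the Clebsch graph is strongly regular with parameters $(16,5,0,2)$, $H$ is strongly regular with parameters $(16,10,6,6)$, and $\omega(H)=5$, $\alpha(H)=2$. The automorphism group $\mathrm{Aut}(H)$ has order $1920$ and acts transitively on the $k$-cliques of $H$ for each $k\in\{1,2,3,4,5\}$ (equivalently, on the independent $k$-sets of the Clebsch graph). Hence, for $H$ itself, it suffices to verify the awesome property for one representative $k$-clique per $k$. For $H-u$, vertex-transitivity of $H$ makes the isomorphism type of the deleted graph unique, and the stabilizer of $u$ (of order $120$) partitions the cliques of $H-u$ into finitely many orbits labeled by $(|K|,\,|K\cap N_H(u)|)$, so again only a short list of representatives must be checked.

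For each representative clique $K$, I would exhibit an induced $P_4$: $v$-$x$-$y$-$z$ with $v\in K$ and $x,y,z\notin K$, working in the standard coordinate model $V(H)=\mathbb{F}_2^5/\langle\mathbf{1}\rangle$ in which adjacency corresponds to Hamming distance $2$ between minimal representatives. The small-clique cases $k\le 3$ are handled by inspection, since $|V(H)\setminus K|\ge 13$ leaves ample slack to pick any desired pattern of adjacencies.

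The main obstacle is the maximum-clique case $k=5$. Two structural facts will drive the argument. First, for each $v\in K$, the set $N^-(v):=\{u\in V(H)\setminus K : uv\notin E\}$ has size exactly $5$ and is itself a clique of $H$: $\alpha(H)=2$ forbids two non-adjacent vertices in $N^-(v)$, since together with $v$ they would form an independent triple. Second, by $\lambda_H=\mu_H=6$ and a short double-count on pairs $(v,v')$ with $v'\in K\setminus\{v\}$, each $v'$ has exactly $3$ neighbors in $N^-(v)$; hence the bipartite subgraph between $N^+(v):=(V(H)\setminus K)\setminus N^-(v)$ and $N^-(v)$ contains exactly $6\cdot 5-4\cdot 3=18$ edges. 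Since $18$ is not a multiple of $5=|N^-(v)|$, it is impossible for every $x\in N^+(v)$ to have either $0$ or $5$ neighbors in $N^-(v)$; therefore some $x\in N^+(v)$ has both a neighbor $y$ and a non-neighbor $z$ in $N^-(v)$. Because $N^-(v)$ is a clique, $yz\in E$, and so $v$-$x$-$y$-$z$ is the required induced $P_4$. The case $k=4$ is handled by the same mechanism with $|N^+(v)|=7$ and more slack, and for $G=H-u$ the analogous counts survive the loss of a single vertex in each parameter; the parity/divisibility obstruction is robust to this small change.
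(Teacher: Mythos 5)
Your counting argument is correct, and elegant, for the first member of $\mathcal{B}$, the complement $H$ of the Clebsch graph: for any clique $K$ with $2\le |K|\le 5$ and any $v\in K$, the non-neighborhood $N^-(v)$ is a $5$-clique sending exactly $30-3(|K|-1)\in\{27,24,21,18\}$ edges to $N^+(v)$, never a multiple of $5$, so the induced $P_4$ exists. (For $|K|=1$ the count is $30$, which \emph{is} a multiple of $5$, so "handled by inspection" hides a real step; it is easily patched by applying the argument to an edge $\{v,w\}$ with start vertex $v$. Also, the opening appeal to clique-transitivity of $\mathrm{Aut}(H)$ is unproven but unnecessary, since the count works for every clique and every $v$.)

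The genuine gap is the second member of $\mathcal{B}$, the graph $G=H-u$: your claim that the divisibility obstruction is "robust to this small change" is false. Redo the count for a $5$-clique $K$ of $G$ and $v\in K$. If $v$ was adjacent to $u$ in $H$, then $N^-_G(v)$ still has $5$ vertices but now sends out only $27$ edges (exactly $3$ of its vertices lost their edge to $u$), of which $12$ go to $K\setminus\{v\}$, leaving $15$ edges to $N^+_G(v)$ --- a multiple of $5$, so no contradiction. If $v$ was not adjacent to $u$, then $N^-_G(v)=N^-_H(v)\setminus\{u\}$ has only $4$ vertices and sends $12+m$ edges to $N^+_G(v)$, where $m=|N_H(u)\cap K|$; this is a multiple of $4$ whenever $m\in\{0,4\}$. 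The bad case is unavoidable: the five non-neighbors of $u$ in $H$ form a $5$-clique $K$ of $G$ (by $\alpha(H)=2$), and for this $K$ every $v\in K$ falls into the second case with $m=0$, giving $12$ edges against the $4$-clique $N^-_G(v)$, so the divisibility obstruction vanishes for \emph{every} choice of $v$. Hence your proof does not establish the lemma for $H-u$. The statement is still true there --- for instance, modelling the Clebsch graph on $\mathbb{F}_2^4$ with adjacency given by Hamming distance $1$ or $4$, one can exhibit the required $P_4$ for this $K$ explicitly --- but settling these residual cases seems to need the actual structure of the Clebsch graph rather than only the parameters $(16,10,6,6)$ and $\alpha(H)=2$, which is presumably why the paper proves the lemma by Merkel's computer verification rather than by hand. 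If you complete the $H-u$ analysis, you would have a genuinely different, human-readable proof of a fact the paper only checks by machine; as written, the argument has a hole exactly where the two graphs of $\mathcal{B}$ differ.
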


\begin{lemma}\label{lem:P4}
Every graph $G\in \mathcal{G}$ is awesome.
\end{lemma}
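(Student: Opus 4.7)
The plan is to proceed by induction on the number of vertices of $G\in\mathcal{G}$. The base case is when $G\in\mathcal{B}$, which is handled immediately by Lemma~\ref{lem:basegraphs}. For the inductive step, I would write $G$ as the result of adding a smaller vertex $u$ to some $G_0\in\mathcal{G}$ with $|V(G_0)|<|V(G)|$, where $N_G(u)\subseteq N_{G_0}(v)$ and $N_G(u)\neq\emptyset$ for some $v\in V(G_0)$ (note $uv\notin E(G)$, else $v\in N_G(v)$). By induction $G_0$ is awesome, and I must verify the awesome property in $G$ for a given non-empty clique $K$.

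If $u\notin K$, then $K$ is a clique of $G_0$ and the required $P_4$ comes directly from awesomeness of $G_0$ and is still induced in $G$, because $V(G_0)$ induces the same subgraph in both. If $u\in K$ with $|K|\geq 2$, then $K\setminus\{u\}$ is a non-empty clique of $G_0$; applying awesomeness of $G_0$ to it gives an induced $P_4$ $p$-$x$-$y$-$z$ with $p\in K\setminus\{u\}\subseteq K$ and $x,y,z\notin K\setminus\{u\}$, and since $x,y,z\in V(G_0)$ we have $x,y,z\neq u$, hence $x,y,z\notin K$. The interesting and main case is $K=\{u\}$.

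For the hard case I would apply awesomeness of $G_0$ at the clique $\{v\}$ to obtain an induced $P_4$ $v$-$x$-$y$-$z$ in $G_0$. Since $y,z\notin N_{G_0}(v)\supseteq N_G(u)$, we automatically have $uy,uz\notin E(G)$. If $x\in N_G(u)$ then $u$-$x$-$y$-$z$ is already an induced $P_4$ starting at $u$. Otherwise $ux\notin E$, and I would fix any $w\in N_G(u)$; using the $P_5$-freeness of $G$ on the 5-vertex path $u$-$w$-$v$-$x$-$y$ (whose would-be chords $uv,ux,uy,vy$ are all confirmed absent from what we know), at least one of $wx,wy$ must be an edge. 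A short sub-case analysis then produces the desired $P_4$ starting at $u$: the path $u$-$w$-$x$-$y$ works when $wx\in E$ and $wy\notin E$, the path $u$-$w$-$y$-$x$ works when $wy\in E$ and $wx\notin E$, and the path $u$-$w$-$y$-$z$ works when $wy\in E$ and $wz\notin E$.

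The genuine obstacle is the remaining possibility that \emph{every} $w\in N_G(u)$ satisfies $wx,wy,wz\in E$, so that $N_G(u)\subseteq N(v)\cap N(x)\cap N(y)\cap N(z)$. To defeat this I would invoke paraglider-freeness: if two such neighbors $w_1,w_2\in N_G(u)$ were non-adjacent, then $\{w_1,v,w_2,y,z\}$ would induce a paraglider in $G_0$, because $\{w_1,v,w_2,y\}$ forms an induced $C_4$ (its potential chords $w_1w_2$ and $vy$ are both non-edges) and $z$ is adjacent to exactly three of its vertices, namely $w_1,w_2,y$, but not to $v$. Hence $N_G(u)$ is forced to be a non-empty clique in $G_0$, and applying awesomeness of $G_0$ to this clique yields an induced $P_4$ $p$-$x'$-$y'$-$z'$ with $p\in N_G(u)$ and $x',y',z'\notin N_G(u)$; then $u$-$p$-$x'$-$y'$ is the desired induced $P_4$ of $G$ starting at $u$, since $up\in E$, $py'\notin E$ from the $P_4$, and $ux',uy'\notin E$ because $x',y'\notin N_G(u)$.
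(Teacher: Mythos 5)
Your proof is correct, and although it shares the forced skeleton with the paper's argument (induction with Lemma~\ref{lem:basegraphs} as the base case, and disposal of every clique $K$ with $K\setminus\{u\}\neq\emptyset$ by the inductive hypothesis), it handles the decisive case $K=\{u\}$ by a genuinely different route. The paper splits on whether $N_G(u)=N_G(v)$: if yes, it swaps $u$ for $v$ in a $P_4$ starting at $v$; if no, it fixes $d\in N(v)\setminus N(u)$ and, when $u$ has two non-adjacent neighbors $s,t$, uses the paraglider on $\{v,s,u,t,d\}$ to obtain the path $u$-$s$-$v$-$d$ outright, so that only the case where $N_G(u)$ is a clique remains, finished by awesomeness at a singleton $\{w\}\subseteq N(u)$ plus one final sub-case; notably, the paper never invokes $P_5$-freeness in this proof. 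You instead anchor everything on the fixed path $v$-$x$-$y$-$z$, use $P_5$-freeness of $G$ (on $u$-$w$-$v$-$x$-$y$) to force every $w\in N_G(u)$ to see $x$ or $y$, read off a $P_4$ whenever some $w$ misses one of $x,y,z$, and, in the residual case where every $w$ is complete to $\{x,y,z\}$, use a paraglider lying wholly inside $G_0$, namely $\{w_1,v,w_2,y,z\}$, to force $N_G(u)$ to be a clique; applying awesomeness of $G_0$ to the entire clique $N_G(u)$ then finishes with no further sub-case, since $x',y',z'\notin N_G(u)$ comes for free (this last step is in fact slightly cleaner than the paper's singleton version). The trade-off: the paper's analysis is shorter and relies on paraglider-freeness alone, so it would apply verbatim to paraglider-free graphs that need not be $P_5$-free, while yours needs both forbidden subgraphs but dispenses with the $N(u)=N(v)$ dichotomy and the auxiliary vertex $d$. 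Your adjacency verifications all check out, including the easily overlooked facts that $uv\notin E(G)$ and that the five vertices in each forbidden configuration are pairwise distinct, so there is no gap.
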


\begin{proof}
Let $G\in \mathcal{G}$. Then $G$ is obtained from a graph $B\in \mathcal{B}$ by adding smaller vertices $u_1,\ldots,u_k$ sequentially.
We prove the lemma by induction on $k$. If $k=0$, then the lemma holds by Lemma \ref{lem:basegraphs}.
Suppose now that the lemma holds for all graphs in $\mathcal{G}$ that are obtained from  a graph $B\in \mathcal{B}$ by adding $k-1$ vertices
for some $k\ge 1$. Let $G'=G-u_k$. By the inductive hypothesis, $G'$ is awesome, i.e., for every non-empty clique
$K$ of $G'$ there exists an induced $P_4$ $:=$ $v$-$x$-$y$-$z$ such that $v\in K$ and $x,y,z\notin K$.

Now let $K$ be a clique of $G$. If $K\setminus \{u_k\}$ is non-empty, then it follows from the inductive hypothesis
that a desired $P_4$ exists for $K$. It remains to consider the case that $K=\{u_k\}$, i.e., to show
that in $G$ there is an induced $P_4$ $:=$ $u_k$-$x$-$y$-$z$.
Let $v\in V(G')$ be the vertex such that $N_G(u_k)\subseteq N_G(v)$.

Suppose first that $N_G(u_k)=N_G(v)$. Since $G'$ is awesome, there exists an induced path  $v$-$x$-$y$-$z$ in $G'$ (consider the clique $\{v\}$).
Thus, $u_k$-$x$-$y$-$z$ is a desired $P_4$.

So, we may assume that there exists a vertex $d\in V(G')$ such that $d$ is adjacent to $v$ but not to $u_k$.
Suppose that $u_k$ has two non-adjacent neighbors $s$ and $t$ in $G$. Since $\{v,s,u_k,t,d\}$ does not induce a paraglider,
$d$ is not adjacent to either $s$ or $t$, say $s$. Then $u_k$-$s$-$v$-$d$ is a desired $P_4$.

So, $N_G(u_k)$ is a clique. Let $w\in V(G')$ be a neighbor of $u$. Since $G'$ is awesome, there exists an induced $P_4$ $:=$ $w$-$x$-$y$-$z$ in $G'$.
Since $N_G(u_k)$ is a clique, $u_k$ is adjacent to neither $y$ nor $z$. If $u_k$ is not adjacent to $x$, then $u_k$-$w$-$x$-$y$ is a desired $P_4$.
Otherwise $u$ is adjacent to $x$ and $u_k$-$x$-$y$-$z$ is a desired $P_4$.
\end{proof}

We are now ready to prove the main theorem in this section.

\begin{theorem}\label{thm:chibound}
Let $G$ be a connected ($P_5$,\,paraglider)-free graph. Then $\chi(G)\le \lceil \frac{3\omega(G)}{2} \rceil$.
Moreover, $\chi(G)>\frac{3\omega(G)}{2}$ if and only if $G\in \mathcal{G}$.
\end{theorem}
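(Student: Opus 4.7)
The plan is to prove both halves simultaneously by strong induction on $|V(G)|$. The ``only if'' direction of the characterization is the easy half: adding a smaller vertex $u$ with $N(u)\subseteq N(v)$ forces $u$ and $v$ to be non-adjacent (else $v\in N(u)\subseteq N(v)$, impossible), so coloring $u$ with $v$'s color preserves $\chi$, and any maximum clique containing $u$ may be rerouted through $v$ to show $\omega$ is preserved. Hence every $G\in\mathcal{G}$ satisfies $\chi(G)=\chi(B)=8$ and $\omega(G)=\omega(B)=5$ for some $B\in\mathcal{B}$, so $\chi(G)=8>\frac{15}{2}=\frac{3\omega(G)}{2}$.

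For the upper bound and the ``if'' direction, I pass through three standard reductions before applying the structure theorem. \textbf{Comparable pair:} If $u,v$ are comparable with $N(u)\subseteq N(v)$, then $\chi(G)=\chi(G-u)$ and $\omega(G)=\omega(G-u)$, so the upper bound transfers by induction; in the extremal case, $G-u\in\mathcal{G}$ by induction and $G\in\mathcal{G}$ by adding $u$ back. \textbf{Universal vertex:} If $w$ is universal, then $\chi(G)=\chi(G-w)+1$ and $\omega(G)=\omega(G-w)+1$; an arithmetic check on the parity of $\omega(G-w)$, using the inductive bound on $G-w$, yields $\chi(G)\le\frac{3\omega(G)}{2}$ unconditionally, so $G$ is not extremal. \textbf{Clique cutset:} If $K$ is a minimal clique cutset of $G$ with pieces $G_i=G[C_i\cup K]$ (where the $C_i$ are components of $G-K$), then $\chi(G)=\max_i\chi(G_i)$ and $\omega(G)=\max_i\omega(G_i)$ give the bound by induction. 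For extremality, a critical piece $G_1$ inherits $\chi(G_1)>\frac{3\omega(G_1)}{2}$ with $\omega(G_1)=\omega(G)$, so by induction $G_1\in\mathcal{G}$ and hence is awesome by Lemma~\ref{lem:P4}. Applying awesomeness to the clique $K$ yields a vertex $v\in K$ starting an induced $P_4$ $v$-$x$-$y$-$z$ with $x,y,z\in C_1$; for any external $u\in C_j$ ($j\ne 1$), $u$ is non-adjacent to $x,y,z$ (they lie in a different component of $G-K$), so $u\sim v$ would produce an induced $P_5$ $u$-$v$-$x$-$y$-$z$. Hence $v$ has no external neighbor, contradicting the minimality of $K$. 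Thus no extremal $G$ has a clique cutset.

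After these reductions, $G$ is an atom with no universal vertex and no comparable pair, so Theorem~\ref{thm:P5P-free} applies and yields four cases: (i) $G$ is an induced subgraph of the complement of the Clebsch graph; (ii) $G$ is a $\overline{P_3}$-free expansion of $C_5$; (iii) $G$ has a stable set $S$ such that $S$ is good or $G-S$ is perfect; (iv) $G\in\mathcal{H}$. Case (ii) gives $\chi(G)\le\lceil\frac{5\omega(G)}{4}\rceil\le\frac{3\omega(G)}{2}$ via Lemmas~\ref{lem:red2} and~\ref{lem:C5exp}; case (iv) gives $\chi(G)\le\frac{3\omega(G)}{2}$ via Lemma~\ref{lem:red2} and Theorem~\ref{thm:H}; case (iii) gives $\chi(G)\le\lceil\frac{3(\omega(G)-1)}{2}\rceil+1\le\frac{3\omega(G)}{2}$ (if $S$ is good, via induction on $G-S$ and a parity check) or $\chi(G)\le\omega(G)+1\le\frac{3\omega(G)}{2}$ (if $G-S$ is perfect and $\omega(G)\ge 2$). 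Only case (i) can realize $\chi(G)>\frac{3\omega(G)}{2}$, and there the computer-verified remark following the definition of $\mathcal{B}$ pins $G$ to lie in $\mathcal{B}\subseteq\mathcal{G}$, closing the induction.

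The main obstacle, in my view, is the clique cutset reduction in the backward direction: marrying Lemma~\ref{lem:P4} (the awesomeness of graphs in $\mathcal{G}$), the minimality of the cutset, and the $P_5$-freeness of $G$ to extract a contradiction is the one genuinely new piece of combinatorial reasoning, since the remaining arguments amount to a case exhaustion of Theorem~\ref{thm:P5P-free} paired with careful ceiling arithmetic.
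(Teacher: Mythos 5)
Your proof is correct and takes essentially the same approach as the paper: strong induction on $|V(G)|$, the same three reductions (comparable pair, universal vertex, and clique cutset, the last disposed of exactly as in the paper via the awesomeness Lemma~\ref{lem:P4}, minimality of the cutset, and $P_5$-freeness), followed by the same four-case analysis of Theorem~\ref{thm:P5P-free} using the computer-verified facts about $\mathcal{B}$, Lemmas~\ref{lem:red2} and~\ref{lem:C5exp}, and Theorem~\ref{thm:H}. The only differences are bookkeeping: you carry the unconditional ceiling bound as the inductive hypothesis (which neatly avoids the paper's sub-case split on whether components of $G\setminus S$ lie in $\mathcal{G}$ in the good-stable-set case), whereas the paper inducts on the statement that $G\notin\mathcal{G}$ implies $\chi(G)\le\frac{3}{2}\omega(G)$ and, unlike your write-up, handles a disconnected $G-w$ in the universal-vertex case by rerouting it through the clique-cutset case (in your ordering, simply apply the inductive bound to each component of $G-w$).
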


\begin{proof}
Observe that every graph $H$ in $\mathcal{G}$ has $\chi(H)=8$ and $\omega(H)=5$ and so $\chi(H)=\lceil \frac{3\omega(H)}{2} \rceil$.
Therefore, if $G\in \mathcal{G}$ then $\chi(G)>\frac{3\omega(G)}{2}$.
We now show by induction on $|V(G)|$ that if  $G\notin \mathcal{G}$, then $\chi(G)\le \frac{3\omega(G)}{2}$.
This will imply the theorem.

First, suppose that $G$ contains a pair of comparable vertices $u$ and $v$, say $N(u)\subseteq N(v)$.
Then since $G\notin \mathcal{G}$, it follows that $G-u\notin \mathcal{G}$.
Moreover, $G-u$ is connected. By the inductive hypothesis,
$\chi(G-u)\le \frac{3\omega(G-u)}{2}$. Note that $\chi(G)=\chi(G-u)$ and $\omega(G)=\omega(G-u)$.
Therefore, $\chi(G)\le \frac{3\omega(G)}{2}$.

Suppose now that $G$ contains a clique cutset. Let $K$ be a minimal clique cutset and $G\sm K$ is the disjoint union of two
subgraphs $H_1$ and $H_2$. Let $G_i=G[K\cup V(H_i)]$ for $i\in \{1,2\}$. Note that $G_1$ and $G_2$ are connected.
We show that  neither $G_1$ nor $G_2$ is in $\mathcal{G}$.
Suppose not. We may assume by symmetry that $G_1\in \mathcal{G}$.
Since $K$ is a non-empty clique of $G_1$, it follows from Lemma~\ref{lem:P4}
that there exists an induced $P_4$ $:=$ $v$-$x$-$y$-$z$ with $v\in K$ and $x,y,z\notin K$.
Since $K$ is minimal, $v$ has a neighbor $w$ in $G_2$. Then $w$-$v$-$x$-$y$-$z$ is a $P_5$.
This contradicts the fact that $G$ is $P_5$-free.
Since neither $G_1$ nor $G_2$ is in $\mathcal{G}$, $\chi(G_i)\le \frac{3\omega(G_i)}{2}$ by the inductive hypothesis.
Therefore, $\chi(G)=\max\{\chi(G_1),\chi(G_2)\}\le \frac{3\omega(G)}{2}$.

Suppose that $G$ contains a universal vertex $u$. If $G-u$ is disconnected, then $\{u\}$ is a clique cutset of $G$ and we are done by the argument on clique cutsets. Therefore, $G-u$ is connected.
If $G-u\notin \mathcal{G}$, then
it follows from the inductive hypothesis that
$\chi(G-u)\le \frac{3\omega(G-u)}{2}+1=\frac{3(\omega(G)-1)}{2}+1<\frac{3\omega(G)}{2}$.
If $G-u\in \mathcal{G}$, then since every graph in $\mathcal{G}$ has clique number 5 and chromatic number 8,
it follows that $\chi(G)=9$ and $\omega(G)=6$. Thus, $\chi(G)=\frac{3\omega(G)}{2}$.

Therefore, we may assume that $G$ contains no clique cutsets, universal vertices or pairs of comparable vertices.
We now can apply the structure theorem.

If $G$ is an induced subgraph of the complement of the Clebsch graph,
then the theorem clearly holds.

If $G$ is a $\overline{P_3}$-free expansion of $C_5$, then it follows by Lemma~\ref{lem:C5exp} that $\chi(G)\le \lceil \frac{5\omega(G)}{4} \rceil
\le \frac{3\omega(G)}{2}$.

Suppose that $G$ has a good stable set $S$, and that $H_1,\ldots,H_t$ are the component of $G\sm S$, where $t\ge 1$.
Since $S$ is good,  it follows that $\omega(H_i)\le \omega(G)-1$.
We claim that $\chi(H_i)\le \frac{3}{2}\omega(G)-1$ for each $i\in \{1,\ldots,t\}$.
If $H_i\notin \mathcal{G}$,  then it follow from the inductive hypothesis that
$\chi(H_i)\le  \frac{3\omega(H_i)}{2}\le \frac{3(\omega(G)-1)}{2}< \frac{3\omega(G)}{2}-1$.
If $H_i\in \mathcal{G}$, then $\omega(H_i)=5$ and  $\chi(H_i)=8$. This implies that $\omega(G)\ge 6$.
Thus, $\chi(H_i)=8=\frac{3}{2}\times 6-1\le \frac{3}{2}\omega(G)-1$.
Therefore, $\chi(G)\le \max_{1\le i\le t}\chi(H_i)+1\le \frac{3}{2}\omega(G)$.

If $G$ has a stable set $S$ such that $S$ is perfect,
then $\chi(G)\le \chi(G\sm S)+1=\omega(G\sm S)+1\le \omega(G)+1\le \frac{3\omega(G)}{2}$.

If $G\in \mathcal{H}$, then the theorem follows from Lemma~\ref{lem:red2} and Theorem~\ref{thm:H}.
\end{proof}

The following construction shows that
our bound in Theorem~\ref{thm:chibound} is tight up to an additive constant.
Consider the graph $G_k$, for $k\ge 2$ as defined earlier.  Note that $G_k\in \cal{H}$.
It is not hard to verify that $G$ is ($P_5$,\,paraglider)-free, $\alpha(G_k)=2$,
and  $\omega(G_k)=k+1$. Since $\chi(G_k)\geq \frac{|V(G_k)|}{\alpha(G_k)}$,
we have $\chi(G_k)\geq\frac{3k}{2}=\frac{3}{2}(\omega(G_k)-1)$.
When $k=2t+1$ for some integer $t\ge 1$, it follows from Lemma \ref{lem:Gk}
that $\chi(G_{k})=3t+2=\lceil \frac{3\omega(G_k)}{2} \rceil -1$.
This implies that there is no $(\frac{3}{2}-\epsilon)$-approximation algorithm for the chromatic number for ($P_5$,\,paraglider)-free graphs
for any $\epsilon>0$.

\medskip
Theorem~\ref{thm:chibound} has the following two corollaries.

\begin{corollary}
Every ($P_5$,\,paraglider)-free graph $G$ has $\chi(G)\le \lceil \frac{3\omega(G)}{2} \rceil$.
\end{corollary}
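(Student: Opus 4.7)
The plan is to reduce immediately to Theorem~\ref{thm:chibound}, which handles the connected case. Given an arbitrary $(P_5,\,\text{paraglider})$-free graph $G$, I would decompose $G$ into its connected components $G_1,\ldots,G_r$. Each $G_i$ is an induced subgraph of $G$, so it inherits the forbidden induced subgraphs and is itself a connected $(P_5,\,\text{paraglider})$-free graph.

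Applying Theorem~\ref{thm:chibound} to each component yields $\chi(G_i) \le \lceil \tfrac{3\omega(G_i)}{2} \rceil$. Since $\omega(G_i) \le \omega(G)$ and the ceiling function is monotone, this gives $\chi(G_i) \le \lceil \tfrac{3\omega(G)}{2} \rceil$ for every $i$.

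Finally, the chromatic number of a graph is the maximum of the chromatic numbers of its connected components, so $\chi(G) = \max_{1\le i\le r} \chi(G_i) \le \lceil \tfrac{3\omega(G)}{2} \rceil$, as required. There is no genuine obstacle here; the corollary is a routine consequence of the connected case, and the only thing worth recording is that the class of $(P_5,\,\text{paraglider})$-free graphs is closed under taking induced subgraphs (hence under taking components).
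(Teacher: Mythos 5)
Your proof is correct and is essentially identical to the paper's: both decompose $G$ into connected components, apply Theorem~\ref{thm:chibound} to each, use $\omega(G_i)\le\omega(G)$ with monotonicity of the ceiling, and conclude via $\chi(G)=\max_i\chi(G_i)$. Nothing further is needed.
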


\begin{proof}
Let $G_1,\ldots,G_t$ with $t\ge 1$ be the components of $G$. Then by Theorem~\ref{thm:chibound},
$\chi(G_i)\le \lceil \frac{3\omega(G_i)}{2} \rceil\le \lceil \frac{3\omega(G)}{2} \rceil$.
Since $\chi(G)=\max_{1\le i\le t}\chi(G_i)$, the corollary follows.
\end{proof}

\begin{corollary}
Let $G$ be a ($P_5$,\,paraglider)-free graph (not necessarily connected). Then $\chi(G)>\frac{3\omega(G)}{2}$ if and only if
there exists a component $C$ of $G$ such that $C\in \mathcal{G}$ and $\omega(G)\le 5$.
\end{corollary}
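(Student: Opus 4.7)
The plan is to reduce immediately to the single-component case via the identities $\chi(G) = \max_{1 \le i \le t}\chi(G_i)$ and $\omega(G) = \max_{1 \le i \le t}\omega(G_i)$, where $G_1,\dots,G_t$ are the components of $G$, and then to apply Theorem~\ref{thm:chibound} together with the fact (already noted inside its proof) that every graph in $\mathcal{G}$ has $\omega = 5$ and $\chi = 8$.

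For the forward direction, I would pick a component $C$ of $G$ achieving $\chi(C) = \chi(G)$. Since $\omega(C) \le \omega(G)$, the hypothesis $\chi(G) > \frac{3\omega(G)}{2}$ forces $\chi(C) > \frac{3\omega(C)}{2}$, so Theorem~\ref{thm:chibound} applied to the connected ($P_5$,\,paraglider)-free graph $C$ places $C$ in $\mathcal{G}$. Hence $\chi(G) = \chi(C) = 8$, and the strict inequality $8 > \frac{3\omega(G)}{2}$ gives $\omega(G) < \frac{16}{3}$, i.e.\ $\omega(G) \le 5$, as required.

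For the converse, the hypothesis supplies a component $C \in \mathcal{G}$; then $\omega(C) = 5$ together with $\omega(G) \le 5$ forces $\omega(G) = 5$, and consequently $\chi(G) \ge \chi(C) = 8 > \frac{15}{2} = \frac{3\omega(G)}{2}$.

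There is no genuine obstacle here: the statement is essentially a bookkeeping consequence of Theorem~\ref{thm:chibound} together with the rigid $(\chi,\omega) = (8,5)$ profile of the graphs in $\mathcal{G}$. The only point worth a line of justification is that in the forward direction the bound $\omega(G) \le 5$ does not need to be assumed but is automatically forced by $\chi(G) = 8 > \frac{3\omega(G)}{2}$.
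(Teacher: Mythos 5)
Your proof is correct and follows essentially the same route as the paper: decompose into components, invoke Theorem~\ref{thm:chibound} on each (connected) component, and use the fact that every graph in $\mathcal{G}$ has $(\chi,\omega)=(8,5)$. The only cosmetic difference is that you prove the forward implication directly by selecting a component realizing $\chi(G)$, whereas the paper argues the same implication contrapositively via a case analysis on whether some component lies in $\mathcal{G}$; the underlying ingredients are identical.
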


\begin{proof}
Clearly, if $G$ has a component $C$ of $G$ such that $C\in \mathcal{G}$ and $\omega(G)\le 5$, then
$\chi(G)=8$ and $\omega(G)=5$ and so $\chi(G)>\frac{3\omega(G)}{2}$.

Conversely, suppose that $G$ does not satisfy the condition. We show that $\chi(G)\le \frac{3\omega(G)}{2}$.
Let $G_1,\ldots,G_t$ with $t\ge 1$ be the components of $G$. If none of $G_1,\ldots,G_t$ is in $\mathcal{G}$,
then by Theorem~\ref{thm:chibound}, $\chi(G_i)\le \frac{3\omega(G_i)}{2}\le \frac{3\omega(G)}{2}$ and so we are done.
So we may assume by symmetry that $G_1\in \mathcal{G}$. Then $\omega(G)\ge 6$.
For each $1\le i\le t$, we show that $\chi(G_i)\le \frac{3\omega(G)}{2}$.
If $G_i\in \mathcal{G}$, then $\chi(G_i)=8=\frac{3}{2}\times 6-1\le \frac{3}{2}\omega(G)-1$.
If $G_i\notin \mathcal{G}$,  then $\chi(G_i)\le \frac{3\omega(G)}{2}$ by Theorem~\ref{thm:chibound}.
This completes the proof.
\end{proof}

\noindent {\bf Acknowledgment.} We would like to thank Owen Merkel for writing a computer program to verify that
the complement of the Clebsch graph and
the graph obtained from the complement of the Clebsch graph by deleting a vertex are the only two induced subgraphs
of the complement of the Clebsch graph that satisfy $\chi=\lceil \frac{3\omega}{2} \rceil$ where the ceiling is necessary,
and Lemma~\ref{lem:basegraphs}.

{\small

}
\end{document}